\theoremstyle{plain}
\newtheorem{mydef}{Definition}
\newtheorem{mythm}{Theorem}[section]
\newtheorem{lemma}{Lemma}[section]
\newtheorem{prop}{Proposition}[section]
\newtheorem{quest}{Question}
\newcommand{\mc}{\mathcal}
\newcommand{\N}{\mathbb{N}}
\newdimen\Squaresize \Squaresize=14pt
\newdimen\Thickness \Thickness=0.4pt
\def\Square#1{\hbox{\vrule width \Thickness
   \vbox to \Squaresize{\hrule height \Thickness\vss
      \hbox to \Squaresize{\hss#1\hss}
   \vss\hrule height\Thickness}
\unskip\vrule width \Thickness} \kern-\Thickness}
\def\Vsquare#1{\vbox{\Square{$#1$}}\kern-\Thickness}
\title[Iterative algebras]{Iterative algebras}
\author{Jason P. Bell and Blake W. Madill}
\thanks{The authors thank NSERC for its generous support.}
\address{University of Waterloo \\
Department of Pure Mathematics \\
Waterloo, Ontario \\
Canada  N2L 3G1\\}
\email{jpbell@uwaterloo.ca}
\email{bmadill@uwaterloo.ca}
\keywords{Graded nilpotent algebras, monoids, iterative algebras, decidability, Lie algebras, monomial algebras, combinatorics on words, morphic words}
\subjclass[2010]{16W50, 16P90, 05A05}
\begin{document}

\begin{abstract} Given a finitely generated free monoid $X$ and a morphism $\phi : X\to X$, we show that one can construct an algebra, which we call an iterative algebra, in a natural way.  We show that many ring theoretic properties of iterative algebras can be easily characterized in terms of linear algebra and combinatorial data from the morphism and that, moreover, it is decidable whether or not an iterative algebra has these properties.  Finally, we use our construction to answer several questions of Greenfeld, Leroy,
Smoktunowicz, and Ziembowski by constructing a primitive graded nilpotent algebra with Gelfand-Kirillov dimension two that is finitely generated as a Lie algebra.
\end{abstract}

\maketitle
\section{Introduction}
A rich class of examples in ring theory is provided by monomial algebras, which have the advantage of having the property that many ring theoretic properties can be understood purely combinatorially in terms of forbidden subwords and other notions from combinatorics on words (cf. \cite{Belov}).  In addition, the study of monomial algebras via combinatorial methods has played a key role in many results from ring theory dealing with growth and other properties due to the intimate connection with noncommutative Gr\"obner bases \cite{Drensky, Giambruno}.  In this paper, we study a certain subclass of monomial algebras which we call \emph{iterative algebras} (see \S 2 for the definition).  One of the problems with the class of monomial algebras is that it is in general a very large and unwieldy class.  One can, in some settings, restrict one's attention to finitely presented monomial algebras, and in this case one has graph theoretic and other combinatorial tools at one's disposal for their study 
(cf. \cite[Chapter 5]{Belov}).  On the other hand, this class of algebras can often be too well-behaved to give one a sense of the types of pathologies that can occur in noncommutative algebra.  Iterative algebras are, in general, non-finitely presented monomial algebras produced via iterated morphisms of monoids.  They retain many of the nice combinatorial properties of finitely presented algebras, but at the same time can sometimes possess bizarre properties due to the iterative nature of their definition.  

Informally, an iterative algebra is obtained by taking a field $k$ and then forming the quotient of $k\{x_1,\ldots ,x_d\}/I$, where $I$ is the ideal generated by all words over the alphabet $\{x_1,\ldots ,x_d\}$ that do not occur as a subword of a right-infinite word $w$ over the alphabet $\{x_1,\ldots ,x_d\}$ that is a fixed point of an endomorphism $\phi$ of the free monoid on $\{x_1,\ldots ,x_d\}$.  In particular, the right-infinite word $w$, being a fixed point of an endomorphism, has many self-similarity properties that make studying the structure theory of the iterative algebras we construct fairly straightforward.  In fact, we are able to show that ring theoretic properties such as begin prime, semiprime, satisfying a polynomial identity and being noetherian are all decidable in terms of the combinatorics of the endomorphism $\phi$ (see Theorem \ref{thm: char} and \S 5).  We are also able to show that the algebras we construct have Gelfand-Kirillov dimension in the set $\{1,2,3\}$ (see Theorem \ref{thm: GK}).  The impetus for the construction of these algebras, however, came from our attempt to provide answers to some questions of Greenfeld, Leroy,
Smoktunowicz, and Ziembowski \cite{GLSZ}.  These questions have to do with graded nilpotent algebras.  These are graded algebras $A$ with the property that if $S$ is a subset of $A$ consisting of homogeneous elements of the same degree, then there is some natural number $N=N(S)$ such that $S^N=(0)$.  Using our iterative algebra construction, we are able to present an example or a ring that simultaneously provides  answers to three questions of Greenfeld \emph{et al.} \cite{GLSZ}.
 \begin{mythm}\label{thm: main} Let $k$ be a field.  Then there exists a finitely generated $k$-algebra $R$ that is a prime, graded nilpotent algebra of Gelfand-Kirillov dimension two that has trivial Jacobson radical and is finitely generated as a Lie algebra.\end{mythm}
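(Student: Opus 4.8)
The plan is to realize $R$ as one carefully designed iterative algebra and then to verify the six asserted properties, using Theorems \ref{thm: char} and \ref{thm: GK} for the ring-theoretic translations and supplying direct combinatorial arguments for the rest. Concretely, I would fix a finite alphabet $X=\{x_1,\dots,x_d\}$, a non-erasing endomorphism $\phi$ of the free monoid $X^{*}$ with a right-infinite fixed point $w=\phi^{\omega}(x_1)$, and a weight function $\|\cdot\|\colon X\to\Z_{>0}$; the iterative algebra $R$ associated to $\phi$ then carries the $\N$-grading by total $\|\cdot\|$-weight, is generated in positive degrees by the images of $x_1,\dots,x_d$, and is therefore finitely generated. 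Three of the remaining properties are comparatively soft. Primeness is the combinatorial condition that any two factors $u,v$ of $w$ embed in a common factor $usv$; I would arrange for $w$ to be uniformly recurrent and aperiodic (for instance by taking $\phi$ primitive), which delivers this condition and hence primeness via Theorem \ref{thm: char}. Gelfand--Kirillov dimension two follows because a primitive aperiodic substitution has factor complexity of linear order, so $\dim_k R_m$ is squeezed between two linear functions of $m$; by Theorem \ref{thm: GK} this forces $\operatorname{GKdim} R = 2$, ruling out the values $1$ and $3$ that theorem otherwise allows. For the Jacobson radical I would show $R$ is (right) primitive: $J(R)$ is a graded ideal because $R$ is graded, and a faithful simple right $R$-module built from the combinatorics of $w$ in the manner standard for monomial algebras forces $J(R)=(0)$.

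The real content is graded nilpotence, and here the weight function must carry the whole burden: under the length grading one has $R_1^{N}=R_N\neq(0)$ for every $N$, so no monomial algebra is graded nilpotent for that grading. Instead I need $\|\cdot\|$ chosen so that for each weight $m$ there is $N=N(m)$ with $R_m^{N}=(0)$, equivalently so that no factor of $w$ of weight $mN$ can be written as a concatenation of $N$ factors of $w$ each of weight exactly $m$. The strategy is to pick $\phi$ and $\|\cdot\|$ so that the set of positions of $w$ at which a block of a prescribed weight may begin is a self-similar, non-periodic set that is incompatible with tiling a long factor by equal-weight pieces; one then argues that any such tiling is forced to be a high power $u^{N}$ of a single factor $u$, and invokes power-freeness of $w$ (guaranteed if $\phi$ is taken in the spirit of an overlap-free substitution) to conclude $R_m^{N}=(0)$. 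Making this hold \emph{simultaneously for every $m$} while keeping $\phi$ primitive and aperiodic is the main obstacle: graded nilpotence pushes the language of $w$ toward sparseness and rigidity whereas primeness and primitivity push it toward richness, so the heart of the argument is an explicit $\phi$ that threads this needle, together with the case analysis verifying $R_m^{N(m)}=(0)$ for all $m$.

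For the final property, that $R$ is finitely generated as a Lie algebra, the plan is to prove by induction on word length that every factor of $w$ lies in the Lie subalgebra $L$ generated by $x_1,\dots,x_d$, so that $L=R$. If a factor $u$ of length $n\ge 2$ has the form $u=x_i v$ and the cyclic shift $v x_i$ is \emph{not} a factor of $w$, then $u=[x_i,v]$ with $v\in L$ by induction, so $u\in L$; dually, if $u=v'x_j$ with $x_j v'$ not a factor, then again $u\in L$. Thus it suffices to verify the (decidable) combinatorial property that every long enough factor of $w$ becomes a non-factor after its first letter is moved to the end or its last letter to the front, and to dispatch the finitely many short factors that may violate it by hand using short combinations of iterated commutators. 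Assembling these verifications for the one morphism $\phi$ then yields an $R$ with all the properties asserted in Theorem \ref{thm: main}.
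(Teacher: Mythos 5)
Your outline follows the same architecture as the paper: build an iterative algebra from a pure morphic word, regrade by a weight function, and translate primeness, GK dimension, just-infiniteness, and semiprimitivity into combinatorial properties of $w$ via Theorems \ref{thm: char} and \ref{thm: GK}, with a cyclic-permutation commutator argument for Lie generation. But as written it is a plan, not a proof, and the gap sits exactly where you say the ``real content'' is: you never exhibit a morphism $\phi$ and weight function, and you never verify graded nilpotence for one. The existence of a $\phi$ that ``threads the needle'' is the theorem; asserting that one can be found does not establish it. The paper's entire construction section is devoted to this: an explicit $4$-uniform morphism on a $12$-letter alphabet $\{x_1,\dots,x_6,y_1,\dots,y_6\}$ with $\deg x_1=1$ and all other letters of degree $2$, followed by a delicate argument (Proposition \ref{prop: grnilpotent}) that the set $\mathcal{S}$ of partial weight sums along $w$ contains no arbitrarily long arithmetic progression of any common difference $d$. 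That argument uses self-similarity of $w$ to upgrade arbitrarily long progressions to an infinite one, a cyclic-rigidity analysis of the residues $T\subseteq\{0,\dots,d-1\}$ that forces $d\mid W_n$ for the weights $W_n$ of $\phi^n(x_1)$, and finally the linear recurrence from the characteristic polynomial of the incidence matrix to rule out any $d>1$. None of this is replaceable by a soft appeal to ``sparseness and rigidity.''

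Moreover, your proposed mechanism for graded nilpotence is not just incomplete but pointed in the wrong direction. A factorization of a subword of $w$ into $N$ pieces each of weight $m$ need not consist of $N$ copies of the same word, so overlap-freeness or power-freeness of $w$ does not let you conclude $R_m^N=(0)$; equal weight is far weaker than equal word. This is precisely why the paper works with the position set $\mathcal{S}$ and arithmetic progressions rather than with repetitions. A smaller issue: your Lie-generation step only allows moving a single letter from one end of $u$ to the other, and claims the exceptions are finitely many short words. The needed statement (the paper's Lemma \ref{lem: cyclic}) is that \emph{some} cyclic permutation $u=ab\mapsto ba$, with $a,b$ of arbitrary lengths, fails to be a subword; the single-letter version can fail for arbitrarily long factors, and the induction must be run over all splittings. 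These fixes are routine, but the absence of an explicit construction and of any actual proof of graded nilpotence means the proposal does not yet prove Theorem \ref{thm: main}.
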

This paper is organized as follows.  In \S2, we describe morphic words and give the construction of an iterative algebra.  In \S3, we show that the Gelfand-Kirillov dimension of these algebras is either one, two, or three.  In \S3, we give characterizations of various ring theoretic properties in terms of combinatorial data of the underlying morphism and associated infinite word; then in \S4, we discuss decidability of these properties.  Finally, in \S5 we prove Theorem \ref{thm: main} (see Theorem \ref{thm: main2}).  We conclude with \S6 by giving some open problems and making some remarks.
\section{Pure morphic words and iterative algebras}
In this section, we give our construction of iterative algebras and give some of the necessary background on morphic words.

Let $\Sigma$ be a finite alphabet.  We let $\Sigma^*$ denote the free monoid on $\Sigma$ (we let $\varepsilon$ denote the empty word, which serves as the identity of this monoid).  Then a morphism $\phi :\Sigma^*\to \Sigma^*$ is determined by the images of the elements of $\Sigma$.   We say that $a\in\Sigma$ is \emph{mortal} if there is some $j$ such that $\phi^j(a)=\varepsilon$, and we let $X$ denote the submonoid of $\Sigma^*$ generated by the mortal letters.  

We then say that the morphism $\phi$ is \emph{prolongable} on the letter $b\in \Sigma$ if $\phi(b)=bx$ with $x\in \Sigma^*\setminus X$.   In this case, we can create an infinite sequence 
$$b,\phi(b)=bx,\phi^2(b)=bx\phi(x), \phi^3(b)=bx\phi(x)\phi^2(x),\ldots $$
by repeatedly iterating $\phi$ and starting with the letter $b$.  Notice that the limit of this sequence is an infinite word
$$\phi^{\omega}(b):=bx\phi(x)\phi^2(x)\cdots.$$  
We then say that a right-infinite word $w$ over the alphabet $\Sigma$ is a \emph{pure morphic word} if there exists a morphism $\phi:\Sigma^*\to \Sigma^*$ that is prolongable on some letter $b\in \Sigma$ such that $w=\phi^{\omega}(b)$.  In general, a \emph{morphic word} is obtained from a pure morphic word by applying a \emph{coding}; that is a letter-by-letter substitution of $w$ by a map $\tau: \Sigma\to \Delta$ where $\Delta$ is another finite alphabet and $\tau$ is not necessarily one-to-one.  For our purposes, we only work with pure morphic words.

We now introduce some terminology, which will give important classes of morphic words.
\begin{mydef}{\em
Let $w$ be a right-infinite morphic word that is associated to a morphism $\phi:\Sigma^*\to \Sigma^*$ that is prolongable on $b\in \Sigma$ for some finite alphabet $\Sigma$.  \begin{enumerate}
\item[(i)] We say that $w$ is a $d$-\emph{uniform} morphic word if $\phi(a)$ is a word in $\Sigma^*$ of length $d$ for every $a\in \Sigma$. 
\item[(ii)] We say that $w$ is \emph{primitive} if for every $a,a'\in \Sigma$ we have that there is some natural number $m$, depending on $a$ and $a'$, such that $a'$ appears as a letter in $\phi^m(a)$.  
\end{enumerate}}
\end{mydef}
We point out that primitivity of a pure morphic word is decidable (see \S 5 for more details).   First, observe that by removing letters from $\Sigma$ if necessary, we may always assume without any loss of generality that every $a\in \Sigma$ occurs in $w$.  Then if $\phi$ is prolongable on $b\in \Sigma$ and $w=\phi^{\omega}(b)$ then if $a'\in \Sigma$ then $a'$ appears in $\phi^n(b)$ for some $n$.  Hence if $b$ occurs as a letter in $\phi^m(a)$ for some $m$ then $a'$ will occur as a letter in $\phi^{n+m}(a)$.  Thus to check primitivity, it suffices to check that for every $a\in \Sigma$, there is some $n$, depending upon $a$, such that $b$ occurs in $\phi^n(a)$.  
We are now able to define an iterative algebra.  (We do not use the term morphic algebra, as that term has been used to name an unrelated class of rings)

Given a pure morphic word on an alphabet $\Sigma=\{x_1,\ldots ,x_m\}$, we can associate an $m\times m$ matrix, $M(w)$, called the \emph{incidence matrix}.  The $(i,j)$-entry of $M(w)$ is the number of occurrences of $x_i$ in $\phi(x_j)$.  Given a word $u\in \Sigma^*$, we can then associate an $m\times 1$ integer vector $\theta(u)$ whose $j$-th coordinate is the number of occurrences of $x_j$ in $u$.  Then \cite[Proposition 8.2.2]{AS} shows that we have the relationship
\begin{equation}
\theta(\phi^n(u)) = M(w)^n \theta(u).\label{eq: theta}
\end{equation}

\begin{mydef}{\em 
Let $k$ be a field, let $\Sigma=\{x_1,\ldots ,x_m\}$ be a finite alphabet, and let $w$ be a right-infinite pure morphic word over $\Sigma$.  We define the \emph{iterative algebra} $A_w$ associated to $w$ to be the quotient 
$k\{x_1,\ldots ,x_m\}/I$, where $I$ is the ideal generated by all finite words over $\Sigma$ that do not appear as a subword of $w$.}
\end{mydef} 
These algebras form a subclass of the monomial algebras studied in \cite[Chapter 3]{Belov}.

\section{Gelfand-Kirillov dimension}
In this section, we discuss the growth of iterative algebras, showing that their Gelfand-Kirillov dimension is either $1$, $2$, or $3$.  We recall that given two maps $f,g :\mathbb{N}_0\to \mathbb{R}_+$, we say that $f(n)=\Theta(g(n))$ if there exist positive constants $C_1$ and $C_2$ such that 
$$C_1g(n) \le f(n) \le C_2 g(n)$$ for all $n$ sufficiently large.

Given a field $k$ and a finitely generated $k$-algebra $A$, we recall that the \emph{Gelfand-Kirillov} dimension of $A$, denoted ${\rm GKdim}(A)$, is given by
$${\rm GKdim}(A):=\limsup_{n\to \infty} \frac{\log\,{\rm dim}(V^n)}{\log\, n},$$
where $V$ is a finite-dimensional subspace of $A$ that contains $1$ and generates $A$ as a $k$-algebra.  This quantity does not depend upon the choice of subspace $V$ having these properties.  For more information on Gelfand-Kirillov dimension, we refer the reader to the book of Krause and Lenagan \cite{KL}.
We note that GK dimension one is equivalent to ${\rm dim}(V^n)=\Theta(n)$ (i.e., linear growth) by Bergman's gap theorem \cite[Theorem 2.5]{KL} and that an important subclass of algebras of GK dimension two are given by those of \emph{quadratic growth}; these are algebras for which ${\rm dim}(V^n)=\Theta(n^2)$.  

In the case of iterative algebras, there is an intimate connection between the Gelfand-Kirillov dimension of the algebra and the \emph{subword complexity} (also called factor complexity in the literature) of the associated pure morphic word $w$.  We recall that given a right-infinite word $w$ over a finite alphabet $\Sigma$, the \emph{subword complexity function} is defined by taking
$p_w(n)$ to be the number of distinct subwords of $w$ of length $n$.  

For a pure morphic word $w$, a result of Pansiot \cite[Theorem 4.7.1]{BR} shows that 
$p_w(n)$ is either $O(1), \Theta(n),\Theta(n \log\log n), \Theta(n\log n)$, or $ \Theta(n^2)$; moreover, each of these possibilities can be realized by some purely morphic word and $p_w(n)=O(1)$ if and only if $w$ is eventually periodic.  If $A_w$ is the iterative algebra corresponding to the purely morphic word $w$ then 
if $V$ is the image of the space $k+\sum_{a\in \Sigma} ka$ in $A_w$ then $V^n$ has a basis consisting of subwords of $w$ of length at most $n$.  In particular,
${\rm dim}(V^n)=\sum_{j=0}^n p_w(j)$.  If $p_w(n)=\Theta(n), \Theta(n\log n),$ or $\Theta(n \log\log n)$, then we respectively have $\sum_{j=0}^n p_w(j)=\Theta(n^2), \Theta(n^2\log n),$ or $\Theta(n^2 \log\log n)$, and so
$$\frac{\log\,{\rm dim}(V^n)}{\log\, n}\to 2$$ as $n\to \infty$.  On the other hand, if $p_w(n)=\Theta(n^2)$ then
$$\frac{\log\,{\rm dim}(V^n)}{\log\, n}\to 3$$ as $n\to \infty$.  Finally, if $p_w(n)={\rm O}(1)$ then ${\rm dim}(V^n)$ grows at most linearly with $n$ and since $A_w$ is infinite-dimensional, we see that the GK dimension of $A_w$ is one in this case.  Putting these observations together we obtain the following result.
\begin{mythm} \label{thm: GK}
Let $k$ be a field and let $A_w$ be an iterative $k$-algebra.  Then ${\rm GKdim}(A_w)\in \{1,2,3\}$ and the Gelfand-Kirillov dimension is equal to one if and only if $w$ is eventually periodic.
\end{mythm}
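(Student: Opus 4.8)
The plan is to assemble the proof essentially from the ingredients already laid out in the discussion preceding the theorem statement, organizing them into a clean argument. First I would fix a field $k$ and an iterative algebra $A_w$ attached to a pure morphic word $w$ over $\Sigma = \{x_1,\ldots,x_m\}$, and let $V$ denote the image in $A_w$ of the finite-dimensional space $k + \sum_{a\in\Sigma} ka$. The key structural observation, which I would state and justify carefully, is that a monomial basis for $V^n$ is given precisely by the (classes of the) subwords of $w$ of length at most $n$: every word that is a subword of $w$ survives in the monomial quotient $A_w = k\{x_1,\ldots,x_m\}/I$, while every word that is not a subword of $w$ lies in $I$ and is hence zero. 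Consequently $\dim(V^n) = \sum_{j=0}^{n} p_w(j)$, where $p_w$ is the subword complexity function of $w$.

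Next I would invoke Pansiot's classification (\cite[Theorem 4.7.1]{BR}), as quoted above, which says that for a pure morphic word $w$ the function $p_w(n)$ is one of $O(1)$, $\Theta(n)$, $\Theta(n\log\log n)$, $\Theta(n\log n)$, or $\Theta(n^2)$, with $p_w(n) = O(1)$ exactly when $w$ is eventually periodic. I would then translate each case into a statement about $\dim(V^n) = \sum_{j=0}^n p_w(j)$: the bounded case gives $\dim(V^n) = \Theta(n)$ (noting $A_w$ is infinite-dimensional so the sum does go to infinity linearly), hence $\mathrm{GKdim}(A_w) = 1$ by Bergman's gap theorem; the three cases $\Theta(n)$, $\Theta(n\log\log n)$, $\Theta(n\log n)$ give $\dim(V^n)$ of orders $\Theta(n^2)$, $\Theta(n^2\log\log n)$, $\Theta(n^2\log n)$ respectively, all of which satisfy $\log \dim(V^n)/\log n \to 2$; and the case $p_w(n) = \Theta(n^2)$ gives $\dim(V^n) = \Theta(n^3)$, so $\log\dim(V^n)/\log n \to 3$. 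Combining these, $\mathrm{GKdim}(A_w) \in \{1,2,3\}$, and the value is $1$ if and only if $w$ is eventually periodic.

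There is not really a single hard obstacle here; the main point requiring care is the first step — verifying rigorously that $V^n$ has exactly the subwords of $w$ of length $\le n$ as a basis. This amounts to two things: that the defining ideal $I$ of a monomial algebra is spanned by the monomials it contains (standard for monomial algebras, but worth a sentence, citing \cite[Chapter 3]{Belov}), and that the property "$u$ is a subword of $w$" is closed under taking subwords, so that products of surviving monomials either vanish or are again subwords of $w$ — this closure is immediate from the definition of "subword." The only other routine checks are the asymptotic estimates $\sum_{j=0}^n \Theta(f(j)) = \Theta(n\,f(n))$ for the monotone-up-to-logs functions $f(n) \in \{n, n\log\log n, n\log n, n^2\}$, together with the elementary limit computations $\log(n^a (\log n)^b)/\log n \to a$; these I would dispatch briefly rather than belabor.

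\begin{proof}
Fix a field $k$ and let $A_w = k\{x_1,\ldots,x_m\}/I$ be the iterative algebra attached to the pure morphic word $w$ over $\Sigma = \{x_1,\ldots,x_m\}$, where $I$ is the ideal generated by all finite words over $\Sigma$ that are not subwords of $w$. Since $I$ is generated by monomials, it is spanned as a $k$-vector space by the monomials lying in it (see \cite[Chapter 3]{Belov}), and a monomial $u$ lies in $I$ precisely when $u$ has some subword that is not a subword of $w$, equivalently when $u$ itself is not a subword of $w$ (as being a subword of $w$ is inherited by all subwords). Hence the images in $A_w$ of the subwords of $w$ form a $k$-basis of $A_w$, and if $V$ denotes the image of $k + \sum_{a\in\Sigma} ka$, then $V^n$ has as a $k$-basis the images of those subwords of $w$ of length at most $n$. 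Writing $p_w(j)$ for the number of distinct length-$j$ subwords of $w$, we obtain
\begin{equation*}
\dim_k(V^n) = \sum_{j=0}^{n} p_w(j).
\end{equation*}

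By Pansiot's theorem \cite[Theorem 4.7.1]{BR}, $p_w(n)$ is one of $O(1)$, $\Theta(n)$, $\Theta(n\log\log n)$, $\Theta(n\log n)$, or $\Theta(n^2)$, and $p_w(n) = O(1)$ if and only if $w$ is eventually periodic. We treat the cases in turn. If $p_w(n) = O(1)$, then $\dim_k(V^n) = O(n)$; since $A_w$ is infinite-dimensional, $\dim_k(V^n)\to\infty$, so $\dim_k(V^n)$ grows at most linearly and at least unboundedly, and Bergman's gap theorem \cite[Theorem 2.5]{KL} forces $\mathrm{GKdim}(A_w) = 1$. If $p_w(n) = \Theta(n)$, $\Theta(n\log\log n)$, or $\Theta(n\log n)$, then summing gives $\dim_k(V^n) = \Theta(n^2)$, $\Theta(n^2\log\log n)$, or $\Theta(n^2\log n)$ respectively, and in each case
\begin{equation*}
\frac{\log\dim_k(V^n)}{\log n} \longrightarrow 2 \qquad (n\to\infty),
\end{equation*}
so $\mathrm{GKdim}(A_w) = 2$. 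Finally, if $p_w(n) = \Theta(n^2)$, then $\dim_k(V^n) = \Theta(n^3)$, whence $\log\dim_k(V^n)/\log n \to 3$ and $\mathrm{GKdim}(A_w) = 3$. In all cases $\mathrm{GKdim}(A_w) \in \{1,2,3\}$, and comparing the cases we see that $\mathrm{GKdim}(A_w) = 1$ occurs exactly when $p_w(n) = O(1)$, that is, exactly when $w$ is eventually periodic.
\end{proof}
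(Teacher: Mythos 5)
Your proof is correct and follows essentially the same route as the paper: identify $\dim(V^n)$ with $\sum_{j=0}^n p_w(j)$, invoke Pansiot's classification of subword complexity for pure morphic words, and compute $\log\dim(V^n)/\log n$ in each case, handling the bounded-complexity case via Bergman's gap theorem and the infinite-dimensionality of $A_w$. The only difference is that you spell out the monomial-basis verification more explicitly than the paper does, which is a harmless (indeed welcome) elaboration.
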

We point out that in the case that $w$ is either primitive or $d$-uniform for some $d$, we can say more.
\begin{prop} Let $k$ be a field and let $w$ be a pure morphic word that is either primitive or $d$-uniform for some $d\ge 2$.  Then $A_w$ has either linear or quadratic growth and if $w$ is not eventually periodic then the growth is quadratic.  
\label{prop: quadratic}
\end{prop}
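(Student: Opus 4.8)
The plan is to reduce the whole statement to an upper bound on the subword complexity function $p_w$. Recall from the discussion preceding the statement that $\dim(V^n) = \sum_{j=0}^n p_w(j)$ and that, by Pansiot's theorem, $p_w(n)$ lies in one of the five classes $O(1)$, $\Theta(n)$, $\Theta(n\log\log n)$, $\Theta(n\log n)$, $\Theta(n^2)$, with the first occurring exactly when $w$ is eventually periodic. So it suffices to prove that, under either hypothesis, $p_w(n) = O(n)$. Granting this, the only surviving cases are $p_w(n) = O(1)$, where $w$ is eventually periodic and $\dim(V^n) = \Theta(n)$ (linear growth, since $A_w$ is infinite dimensional), and $p_w(n) = \Theta(n)$, where $\dim(V^n) = \sum_{j=0}^n p_w(j) = \Theta(n^2)$ (quadratic growth); since eventual periodicity is precisely the $O(1)$ case, aperiodicity of $w$ then forces quadratic growth, which is exactly the assertion.

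For the $d$-uniform case ($d\ge 2$) I would argue that $w$, being a fixed point of a $d$-uniform morphism, is in particular a $d$-automatic sequence, so Cobham's theorem on the complexity of automatic sequences gives $p_w(n) = O(n)$ directly. Alternatively, a $d$-uniform morphism is growing with every letter of growth order exactly $d^n$, so the superlinear regimes $\Theta(n\log\log n)$, $\Theta(n\log n)$, $\Theta(n^2)$ in Pansiot's list, which require non-growing letters or letters of differing growth orders, are excluded.

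For the primitive case I would first record that $\phi$ is non-erasing: if $\phi(a)=\varepsilon$ then $\phi^m(a)=\varepsilon$ for all $m\ge 1$, contradicting primitivity, so every letter is growing. Next, the incidence matrix $M(w)$ is a primitive nonnegative integer matrix, so by Perron--Frobenius it has a dominant real eigenvalue $\lambda$ (with $\lambda>1$ under the standing hypotheses); from \eqref{eq: theta}, $|\phi^n(a)|$ is the sum of the entries of $M(w)^n\theta(a)$, whence there are constants $c_1,c_2>0$ with $c_1\lambda^n\le|\phi^n(a)|\le c_2\lambda^n$ for all $a\in\Sigma$ and all $n$. Thus all letters grow at the same exponential rate, and Pansiot's classification again leaves only $p_w(n)=O(1)$ or $\Theta(n)$. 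Rather than invoking the fine form of Pansiot's theorem, I could instead note that primitivity furnishes a $k$ with every letter of $\Sigma$ occurring in $\phi^k(a)$ for every $a$, and combine this with the uniform bounds on $|\phi^n(a)|$ to show by a standard argument that $w$ is linearly recurrent, i.e. there is a constant $C$ such that every length-$n$ factor of $w$ occurs in every length-$Cn$ factor; linearly recurrent words satisfy $p_w(n)\le Cn$.

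The hard part is the combinatorial input that neither hypothesis allows one to land in a superlinear complexity regime; I expect to obtain this from Cobham's theorem in the uniform case and from linear recurrence of primitive substitutive words (or, uniformly, from the refined Pansiot classification together with the Perron--Frobenius growth estimates) in the primitive case. The remaining pieces — the non-erasing observation, the estimate on $|\phi^n(a)|$ via \eqref{eq: theta}, and the passage from $p_w(n)=O(n)$ to linear or quadratic growth of $\dim(V^n)$ through $\dim(V^n)=\sum_{j=0}^n p_w(j)$ — are routine.
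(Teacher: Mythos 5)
Your proposal is correct and follows essentially the same route as the paper: reduce everything to the bound $p_w(n)=O(n)$, obtained from Cobham's theorem on automatic sequences in the $d$-uniform case and from the linear-complexity/linear-recurrence result for primitive morphic words in the primitive case, then pass to $\dim(V^n)=\sum_{j=0}^n p_w(j)$ together with Pansiot's dichotomy between $O(1)$ (eventually periodic) and $\Theta(n)$. The paper cites exactly these facts (Allouche--Shallit, Theorem 10.4.12 and Corollary 10.3.2, with Cobham's theorem supplying automaticity of $d$-uniform fixed points).
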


\begin{proof}
By \cite[Theorem 10.4.12 and Corollary 10.3.2]{AS} we see that if $w$ is primitive or $d$-uniform\footnote{Corollary 10.3.2 of \cite{AS} refers to the subword complexity of automatic words.  A result of Cobham (see \cite[Theorem 6.3.2]{AS}) gives that a $d$-uniform pure morphic word is automatic.} and $w$ is not eventually periodic then $p_w(n)=\Theta(n)$ and so the result now follows from the remarks made earlier in this section.  If $w$ is eventually periodic, then $p_w(n)=O(1)$ and so $A_w$ has linear growth.
\end{proof}
We note that the prime spectra of monomial algebras of quadratic growth are particularly well-behaved \cite{BS}. 
\section{Ring theoretic properties in terms of combinatorics of words}
In this section, we give characterizations of some basic ring theoretic properties of iterative algebras in terms of the associated morphic words.
\begin{mythm}
Let $k$ be a field, let $w$ be a right-infinite morphic word over an alphabet $\Sigma=\{x_1,\ldots ,x_m\}$, and let $A_w$ be the iterative $k$-algebra associated to $w$.  We assume that every letter in $\Sigma$ appears as a letter in $w$.  Then the following characterizations hold:
\begin{enumerate}
\item $A_w$ is prime if and only if the first letter of $w$ occurs at least twice;
\item $A_w$ is semiprime if and only if it is prime;

\item $A_w$ is just infinite if and only if $w$ is uniformly recurrent (that is, given a subword $v$ of $w$ there exists some $N=N(v)$ such that every block of $N$ consecutive letters in $w$ contains $v$ as a subword);
\item $A_w$ satisfies a polynomial identity if and only if $w$ is eventually periodic;
\item $A_w$ is noetherian if and only if $w$ is eventually periodic;

\end{enumerate}
\label{thm: char}
\end{mythm}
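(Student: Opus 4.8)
The plan is to use throughout the standard model of the monomial algebra $A_w$: it has a $k$-basis consisting of the finite subwords of $w$ (including $\varepsilon = 1$), and the product of two basis words is their concatenation if that is again a subword of $w$ and $0$ otherwise. Write $S$ for the set of subwords of $w$; everything reduces to combinatorics of $S$, where the two useful facts are that $\phi(w)=w$ (so $\phi$ maps $S$ into $S$) and that every element of $S$ is a subword of $\phi^n(b)$ for all large $n$. For (1), recall that a monomial algebra is prime iff any two nonzero monomials $u,v$ admit a word $z$ with $uzv$ a nonzero monomial; i.e. $A_w$ is prime iff for all $u,v\in S$ there is a word $z$ with $uzv\in S$. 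I would first show that the first letter $b$ of $w$ occurs at least twice iff it occurs infinitely often: if $b$ occurs at some position $p\ge 1$, then applying $\phi^n$ to the splitting of $w$ at that position exhibits an occurrence of $b$ at position $|\phi^n(w[0,p-1])|$, and since $w[0,p-1]$ begins with $b$ and $x\notin X$ forces $|\phi^n(b)|\to\infty$, these positions are unbounded. Granting this, if $b$ recurs, then given $u,v\in S$ I fix an occurrence of $u$, fix $n$ so large that $v$ is a subword of $\phi^n(b)$, and choose an occurrence of $b$ so far to the right that its $\phi^n$-image (an occurrence of $\phi^n(b)$, hence containing an occurrence of $v$) lies past the chosen occurrence of $u$; reading off the intervening stretch of $w$ gives $uzv\in S$. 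Conversely, if $b$ occurs only at position $0$ then $uzb\notin S$ for every nonempty $u$ and every $z$, so $uA_wb=0$ and $A_w$ is not prime. The same observation gives (2): one implication is trivial, and if $A_w$ is not prime then $b$ occurs once, so $bzb\notin S$ for all $z$, whence $bA_wb=0$ with $b\ne 0$ and $A_w$ is not semiprime.

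For (3), note $A_w$ is always infinite-dimensional (it contains all prefixes $w[0,n-1]$). If $w$ is not uniformly recurrent, choose a nonempty $v\in S$ admitting arbitrarily long $v$-avoiding windows; then $A_wvA_w$ is a nonzero proper ideal and $A_w/A_wvA_w$, being spanned by the $v$-avoiding subwords, is infinite-dimensional, so $A_w$ is not just infinite. For the converse, let $J\ne 0$ be an ideal and pick $0\ne f=\sum_{i=1}^k c_im_i\in J$ with support size $k$ minimal. Minimality forces that whenever $am_1b\in S$ one has all $am_ib\in S$, so the $m_i$ all have the same set of two-sided contexts in $w$; since $w$ is a uniformly recurrent morphic word, distinct factors of the same length have distinct two-sided contexts, so $f$ has a \emph{unique} longest monomial $m_1$. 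Uniform recurrence then supplies $N$ with every length-$N$ window of $w$ containing $m_1$; writing a factor $u$ of length $\ge N$ as $u'm_1u''$ and using $u'fu''=c_1u+(\text{terms of length}<|u|)$, an induction on length shows every factor of $w$ lies in $J$ plus the span of the finitely many factors of length $<N$, so $A_w/J$ is finite-dimensional.

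For (4) and (5), suppose first $w$ is eventually periodic; by Theorem \ref{thm: GK}, $A_w$ has GK dimension one. Its prime radical $N$ is spanned by the subwords of $w$ that occur only finitely often, and one checks that $N$ is nilpotent and finitely generated as an ideal while $A_w/N$ is an affine semiprime algebra of GK dimension at most one; by the Small--Stafford--Warfield theorem $A_w/N$ is module-finite over its centre, hence PI and (right) noetherian, and therefore so is $A_w$. Conversely, suppose $w$ is not eventually periodic, so by Morse--Hedlund $p_w$ is strictly increasing and the tree of factors of $w$ has a right-special (branch) factor of every length, hence infinitely many branch points. A K\"onig's-lemma argument then produces an infinite path meeting infinitely many right-special factors $v_1,v_2,\dots$; writing $v_jc_j$ for a subword along a branch the path does not follow, the right ideals $\sum_{j\le h}v_jc_jA_w$ form a strictly increasing chain, so $A_w$ is not (right) noetherian, giving (5). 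For (4), if $A_w$ were PI then Shirshov's height theorem would give words $z_1,\dots,z_N$ and a bound $H$ such that every subword of $w$ is, as a word, of the form $z_{i_1}^{e_1}\cdots z_{i_s}^{e_s}$ with $s\le H$; applying this to the prefixes of $w$ and inducting on $H$ (at each stage the leading periodic block either has unbounded exponent, forcing $w$ periodic, or bounded length, so one passes to a shorter tail) forces $w$ eventually periodic, a contradiction.

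The step I expect to be the main obstacle is the input used in (3) that a uniformly recurrent morphic word has no two distinct factors of the same length with identical two-sided contexts: this is precisely what promotes the minimal-support element of $J$ to one with a unique longest monomial, and a clean proof seems to need the recognizability properties of morphic words (desubstituting along $\phi$ to show such a coincidence would propagate into a genuine periodicity of $w$). The deduction of eventual periodicity from finite Shirshov height in (4), and the organization of the right-special factors into a single ascending chain in (5), also require care but should be routine combinatorics on words once the framework above is in place.
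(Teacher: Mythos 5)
Your parts (1) and (2) follow essentially the paper's own route (iterating $\phi$ on a second occurrence of the first letter $b$ to push an occurrence of $\phi^n(b)$, hence of any prescribed factor, arbitrarily far to the right; and noting $bA_wb=0$ when $b$ occurs once), and they are correct. The one genuine gap is in part (3). The paper disposes of (3) by citing \cite{Belov}*{Theorem 3.2}; you instead attempt a direct proof whose crux is the assertion that in a uniformly recurrent word two distinct factors of the same length cannot have identical sets of two-sided contexts. As you yourself observe, this is the load-bearing step: it is what converts your minimal-support element of $J$ into one with a unique longest monomial, after which the uniform-recurrence induction is routine. But this assertion is essentially equivalent to the theorem being proved (if such an ``interchangeable'' pair $u\neq v$ existed, $u-v$ would generate an ideal of infinite codimension), it is false for finite context lengths even in simple periodic examples, and proving it for all context lengths requires real input -- recognizability/return-word machinery of the kind packaged in the cited reference. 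Flagging it does not discharge it, so the ``uniformly recurrent $\Rightarrow$ just infinite'' direction is incomplete as written.

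For (4) and (5) you take a genuinely different and essentially sound route. The paper proves ``PI $\Rightarrow$ eventually periodic'' by passing to the prime quotient by the finitely-occurring factors and invoking Belov's periodicity remark plus the Morse--Hedlund-type theorem \cite{AS}*{Theorem 10.2.6}, and gets ``noetherian $\Leftrightarrow$ GK dimension one'' from \cite{Belov}*{Corollary 5.40}. You instead use Shirshov's height theorem (valid here because a spanning set of monomials in a monomial algebra must contain every basis monomial, so every prefix of $w$ is literally a bounded product of powers, and the pigeonhole/induction on height does force eventual periodicity), and for non-noetherianity you build a strictly ascending chain of right ideals from right-special factors along an infinite path; both arguments check out and are more self-contained than the paper's citations. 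In the converse direction your treatment of the eventually periodic case is sketchier than it should be: the nilpotence of the span $N$ of finitely-occurring factors does hold (writing $w=pq^\omega$, any product of finitely-occurring factors that survives must start inside the preperiod $p$, giving $N^{|p|+1}=0$), but ``$N$ is finitely generated as an ideal'' and the lift of noetherianity from $A_w/N$ through $N$ (which requires the $N^i/N^{i+1}$ to be finitely generated one-sided modules, not merely bimodules) are asserted rather than proved. These are fixable, unlike the gap in (3).
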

\begin{proof}
Since $A_w$ is a monomial algebra, we know that if $A_w$ fails to be prime then there must exist subwords $v,v'$ of $w$ such that $vuv'$ is not a subword of $w$ for any $u\in \Sigma^*$.  Thus we can never have $v'$ occurring after $v$ in $w$ and since both $v'$ and $v$ occur as subwords of $w$ we see that there is some subword $u$ of $w$ that has $v'$ as a prefix and $v$ as a suffix.  By assumption $w=\phi^{\omega}(b)$ for some $b\in \Sigma$ such that $\phi$ is prolongable on $b$, and so there is some $n$ such that $\phi^n(b)$ contains the subword $u$.  If $b$ occurs at least twice in $w$ then there is some word $u'$ such that $bu'b$ occurs as a prefix of $w$ and so
$\phi^n(b)\phi^n(u')\phi^n(b)$ is also a prefix of $w$.  But this now means that $uu''u$ occurs as a subword of $w$ and this contradicts the fact that no element from $v\Sigma^* v'$ occurs as a subword of $w$.   Thus if $A_w$ is not prime then the first letter of $w$ occurs only once.  

On the other hand if $b$ is the first letter of $w$ and it only occurs once in $w$ then $bub$ cannot be a subword of $w$ for any word $u$ and so the image of $b$ in $A_w$ generates a nilpotent ideal.   This proves (1) and (2).  

Next, by \cite[Theorem 3.2]{Belov}, $A_w$ is just infinite if and only if $w$ is a uniformly recurrent word, and so this gives (3).  To see (4), note that if $w$ is eventually periodic, then $A_w$ has Gelfand-Kirillov dimension one by Theorem \ref{thm: GK} and hence is PI \cite{SSW}. 
Conversely, suppose that $A_w$ satisfies a polynomial identity.   If we take the ideal $I$ of $A_w$ generated by the images of all subwords of $w$ that only appear finitely many times in $w$ then by construction the algebra $B=A_w/I$ is a prime monomial algebra and satisfies a polynomial identity.  
It is straightforward to see that there is some recurrent word $u$ such that the images of the subwords of $u$ in $B$ form a basis for $B$.   By \cite[Remark, page 3523]{Belov}, we then have that $u$ is periodic since $B$ satisfies a polynomial identity.  We let $q(n)$ denote the collection of subwords of $w$ of length $n$ that appear infinitely often in $w$.  Then by construction $q(n)$ is precisely the number of distinct subwords of $u$ of length $n$ and so $q(n)={\rm O}(1)$.  In particular, there is some $d$ such that $q(d)\le d$.   Let $v_1,\ldots ,v_m$ denote the distinct subwords of $w$ of length $d$.  By assumption, at most $d$ of these words occur infinitely often and so we may write $w=vw'$ where $v$ is finite and $w'$ has at most $d$ subwords of length $d$.  We then have that $w'$ is eventually periodic \cite[Theorem 10.2.6]{AS} and so $w$ is eventually periodic.  This gives (4).


Finally, by \cite[Corollary 5.40]{Belov}, $A_w$ is noetherian if and only if $A_w$ has GK dimension one.  But this occurs if and only if $w$ is eventually periodic by Theorem \ref{thm: GK}.
\end{proof}
\section{Decidability}
One of the advantages of working with iterative algebras is that many of the ring theoretic properties described in the preceding section are decidable for these algebras; that is, one can give an algorithm which inputs the data associated with the morphism and tells whether or not the corresponding algebra has one of the ring theoretic properties described earlier.  As Theorem \ref{thm: char} shows, many of the relevant properties for iterative algebras can be described in terms of a handful of properties of the corresponding pure morphic word.  In particular, it suffices to understand whether a pure morphic word is eventually periodic, uniformly recurrent, or whether certain letters occur twice.  In fact, these are well-understood for pure morphic words and we give a summary of some of what is known.
\begin{mythm} Let $w$ be a pure morphic word over a finite alphabet $\Sigma$.  Then it is decidable whether or not $w$ has the following properties:
\begin{enumerate}
\item $w$ is eventually periodic;
\item $w$ is primitive;
\item $w$ is uniformly recurrent;
\item $a\in \Sigma$ occurs at least twice in $w$;
\end{enumerate}
\end{mythm}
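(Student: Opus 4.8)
The plan is to prove each of the four decidability statements in turn, leaning on known results from the combinatorics-on-words literature (primarily \cite{AS} and \cite{BR}), and on the normalization reductions already set up in the excerpt.

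\emph{Eventual periodicity.} For (1), I would invoke the decidability of eventual periodicity for pure morphic words. This is a classical result; a clean reference is Pansiot's theorem together with its algorithmic refinements (see \cite[Section 4.7]{BR}), and for automatic (in particular $d$-uniform) words it follows from Honkala's theorem. More conceptually, one can argue via the subword complexity trichotomy already quoted: by Theorem~\ref{thm: GK} and the remarks preceding it, $w$ is eventually periodic if and only if $p_w(n) = O(1)$; since the growth class of $p_w(n)$ among Pansiot's five possibilities is computable from $\phi$ and $b$ (one can compute $p_w(n)$ for finitely many values of $n$ up to an effective bound that distinguishes the classes), eventual periodicity is decidable. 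I would cite the known effective version rather than reprove it.

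\emph{Primitivity.} For (2) I would simply point to the discussion already given in \S2: after deleting letters of $\Sigma$ that do not occur in $w$ (this deletion is itself effective, since a letter $a$ occurs in $w$ iff $a$ occurs in $\phi^n(b)$ for some $n \le |\Sigma|$), primitivity reduces to checking, for each $a \in \Sigma$, whether $b$ occurs in $\phi^n(a)$ for some $n$. The set of letters occurring in $\phi^n(a)$ is nondecreasing in the appropriate sense once one passes to $\phi^{|\Sigma|!}$ or iterates long enough, so it suffices to examine $n \le |\Sigma|$; equivalently one computes the reachability closure in the digraph on $\Sigma$ with an edge $a \to c$ whenever $c$ appears in $\phi(a)$, and checks that $b$ is reachable from every vertex. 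This is a finite computation.

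\emph{Uniform recurrence and letter multiplicity.} For (3), I would cite the known decidability of uniform recurrence for pure (indeed automatic and HD0L) words; a convenient reference is \cite[Chapter 10]{AS} or the results surveyed in \cite{BR} on linear recurrence of morphic words, noting that for primitive morphisms uniform recurrence is automatic and in the general case one decomposes according to the mortal/immortal structure of letters. For (4), I would again reduce to an effective computation: the number of occurrences of $a$ in $\phi^n(b)$ is the $a$-coordinate of $M(w)^n \theta(b)$ by \eqref{eq: theta}, which is a nondecreasing integer sequence (entries of $M(w)$ are nonnegative and $b$ is prolongable, so coordinates are monotone nondecreasing once $\phi$ has been iterated enough); hence $a$ occurs at least twice in $w$ iff the $a$-coordinate of $M(w)^n\theta(b)$ is at least $2$ for some $n$ bounded effectively in terms of $m$, or alternatively one tracks occurrences directly through the self-similar expansion $w = bx\phi(x)\phi^2(x)\cdots$ and observes that if $a$ occurs once it occurs in some $\phi^k(x)$, after which one checks whether a second occurrence can ever appear. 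Both are finite checks.

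The main obstacle is part (3): uniform recurrence of a general pure morphic word is genuinely more delicate than the other three items, since the morphism may have mortal letters and erasing behaviour, and a naive bounded search does not obviously suffice. I would handle this by citing the established algorithm (via the classification into the "growing" and "bounded" letters and the associated return-word / linear-recurrence analysis in \cite{AS} and \cite{BR}) rather than reconstructing it, since the paper's emphasis is that these word-combinatorial facts are already known and it is their consequences for $A_w$ that are new.
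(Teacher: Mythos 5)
Your proposal follows essentially the same route as the paper: the two genuinely hard items, (1) and (3), are disposed of by citation (the paper cites Harju--Linna \cite{HL} and Pansiot \cite{Pa} for eventual periodicity and Durand \cite{Durand} for uniform recurrence --- the latter is the reference you actually want for (3), since linear recurrence of primitive morphic words does not cover the general erasing/mortal case you rightly worry about), while (2) and (4) are reduced to finite computations from the morphism data. The only real divergence is in how those finite computations are organized: the paper notes that $f(n)={\bf u}^T M(w)^n\theta(b)$ satisfies a linear recurrence by Cayley--Hamilton and decides from the solved recurrence whether $f(n)\le 1$ for all $n$, treating primitivity analogously, whereas you use digraph reachability for (2) and monotonicity of $f$ for (4); both work, and your reachability argument for (2) is if anything cleaner. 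Two small points. First, your claim in (4) that the search can be truncated at a bound ``effective in terms of $m$'' needs one more sentence: for instance, track the coordinates of $M(w)^n\theta(b)$ capped at $2$; these evolve deterministically through at most $3^m$ states and are coordinatewise nondecreasing (because $\phi^{n-1}(b)$ is a prefix of $\phi^n(b)$), so they stabilize within $3^m$ steps and it suffices to check $n\le 3^m$. Second, your ``more conceptual'' alternative for (1) via Pansiot's complexity trichotomy is not actually an algorithm: distinguishing $p_w(n)=O(1)$ from $p_w(n)=\Theta(n)$ by sampling finitely many values of $p_w$ presupposes an effective threshold whose existence is essentially the periodicity problem itself, so you are right to fall back on the citation rather than rely on that sketch.
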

\begin{proof} Decidability of the eventual periodicity question was answered by Harju and Linna \cite{HL} and independently by Pansiot \cite{Pa}. Durand \cite{Durand} showed that uniform recurrence was decidable.  Primitivity and questions about occurrence of letters are much more straightforward. Let $b$ be the first letter of $w$, let $M(w)$ be the incidence matrix of $w$ and let $\phi$ be the underlying morphism.  Then there is some fixed vector ${\bf u}$ such that the number of occurrences of $a$ in $\phi^n(b)$ is given by
$f(n)={\bf u}^T M(w)^n\theta(b)$.  Thus if $a$ occurs at most once, then $f(n)=0,1$ for all $n$.  Now $f(n)$ satisfies a linear recurrence that can be computed explicitly using the Cayley-Hamilton theorem and by computing the first $d$ terms of $f(n)$, where $d=\#\Sigma$.  Solving the recurrence, it is straightforward to decide whether or not this is the case.  Primitivity can be decided analogously.  
\end{proof}
\section{Construction}
In this section, we give an example of an iterative algebra, which gives answer to several questions of Greenfeld, Leroy,
Smoktunowicz, and Ziembowski \cite{GLSZ}.  (In particular, Question 31 and 36 have the answer `no' and Question 32 has the answer `yes'.)   The questions we consider deal with \emph{graded nilpotent} algebras.  These are graded algebras $A$ with the property that if $S$ is a subset of $A$ consisting of homogeneous elements of the same degree, then there is some natural number $N=N(S)$ such that $S^N=(0)$.  We will show how one can use iterative algebras to construct such rings.  Iterative algebras are unital and thus will not be graded nilpotent without some alteration; we will show, however, that by taking the positive part of an iterative algebra then in certain cases one can find a grading that gives a graded nilpotent algebra.  Questions 31 and 32 of \cite{GLSZ} ask respectively whether a graded nilpotent algebra must be Jacobson radical and whether it can have GK dimension two; Question 36 asks whether a graded nilpotent algebra that is finitely generated as a Lie algebra must be nilpotent.  We give answers to these questions with a single example.  Specifically, we construct a graded nilpotent algebra of quadratic growth that is finitely generated as a Lie algebra and whose Jacobson radical is trivial.  
  
Let $\Sigma=\{x_1,\ldots, x_6,y_1,\ldots ,y_6\}$ and let $\phi : \Sigma^*\to \Sigma^*$ be the morphism given by
\[ \left. \begin{matrix} x_1\mapsto x_1x_2y_1y_2& x_2\mapsto x_1x_3 y_1y_3 & x_3 \mapsto x_1x_4y_1y_4
\\
x_4\mapsto x_1x_5y_1y_5 & x_5\mapsto x_1x_6y_1y_6 & x_6\mapsto x_2x_3y_2y_3 \\
y_1\mapsto x_2x_4y_2y_5 & y_2\mapsto x_2x_5y_3y_4 & y_3\mapsto x_2x_6y_2y_6 \\
y_4\mapsto x_3x_4y_3y_5 & y_5 \mapsto x_3x_5y_3y_6 & y_6\mapsto x_3x_6y_4y_5. \end{matrix}\right. \]
Let $w$ be the unique right infinite word whose first letter is $x_1$ and that is a fixed point of $\phi$.  We note that $w$ is $4$-uniform. 

Then a straightforward computer computation shows that the incidence matrix $M(w)$ 
has characteristic polynomial
\begin{equation}
P_w(x):=x^{12}   - x^{11}   - 8 x^{10}   - 16 x^9  - 2 x^8  + 5 x^7  + 5 x^6  + 21 x^5  + 31 x^4  - 10 x^3  - 8 x^2.
\label{eq: charpoly}
\end{equation}
We now put a grading on $A_w$ by declaring that $x_1$ has degree one and that all other letters in $\Sigma$ have degree two.  We let $W_n$ denote the degree of $\phi^n(x_1)$.   Then Equation (\ref{eq: theta}) gives 
\begin{equation}
W_n={\bf u} M(w)^n \theta(x_1),
\end{equation} where ${\bf u}=[1,2,\ldots ,2]$.   A straightforward computer calculation shows that
for $n=0,1,\ldots ,12$, we get the sequence of values
\begin{equation}\label{eq: list}
1,9,40, 162,655,2627,10487,41987,167922, 671648, 2686840, 10746875, 42987905
\end{equation} for $W_n$.

\begin{prop} Let $d$ be a positive integer.  Then with the grading described above, the algebra $A_w$ has the property that the homogeneous component of degree $d$ is nilpotent.
\label{prop: grnilpotent}
\end{prop}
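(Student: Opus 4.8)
The plan is to show that for each fixed degree $d$, all words of degree $d$ in the monomial algebra $A_w$ are nilpotent, i.e. each such word $v$ satisfies $v^N = 0$ in $A_w$ for some $N$. Since $A_w$ is a monomial algebra, $v^N = 0$ is equivalent to $v^N$ not being a subword of $w$, and since $v$ itself is a subword of $w$, it suffices to show that $v$ does not appear arbitrarily many times \emph{consecutively} in $w$ — more precisely, that there is a bound $N = N(d)$ such that $v^N$ is not a factor of $w$ for any degree-$d$ word $v$.

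First I would analyze the degree function. A length-$\ell$ subword $v$ of $w$ has degree equal to $\ell + (\text{number of occurrences of } x_1 \text{ in } v) \cdot 0 + \ldots$; more precisely, writing the degree as $2\ell$ minus the number of $x_1$'s in $v$ (since $x_1$ has degree $1$ and every other letter degree $2$), a word of degree $d$ has length between $d/2$ and $d$. So there are only finitely many words of each fixed degree, and it suffices to bound, for each individual subword $v$ of $w$, the maximal $N$ with $v^N$ a factor of $w$; the uniformity over degree $d$ follows from finiteness. Thus the real content is: \emph{no subword of $w$ appears with unbounded consecutive repetition}, i.e. $w$ is not ``strongly repetitive'' in this sense. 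Equivalently, I must rule out that $w$ contains $k$-th powers $v^k$ for arbitrarily large $k$ (for any $v$ of the relevant bounded length).

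The key mechanism is the $4$-uniform self-similar structure together with a counting/growth argument using the incidence matrix $M(w)$ and the degree vector ${\bf u} = [1,2,\ldots,2]$. Suppose toward a contradiction that $v^k$ is a factor of $w$ for some word $v$ of degree $d$ and some large $k$. Because $w$ is a fixed point of the $4$-uniform morphism $\phi$, a long block of $w$ desubstitutes: there is a factor $v'$ of $w$ with $\phi(v') $ containing $v^k$ up to bounded boundary error, and iterating, $\phi^n$ applied to a short factor produces a block of $w$ of length $4^n|v'|$ that is ``almost'' a power of $\phi^{n-1}(\text{something})$. The crucial point is the \emph{degree bookkeeping}: the degree of $\phi^n(x_1)$ is $W_n = {\bf u} M(w)^n \theta(x_1)$, and from the list \eqref{eq: list} one checks $W_{n+1}/W_n \to 4$ (indeed $W_{n+1} \approx 4 W_n$), while the \emph{length} of $\phi^n(x_1)$ is exactly $4^n$. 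So degree and length grow at the same rate $4^n$, and a fixed-degree-$d$ word $v$ has bounded length; a power $v^k$ has length $k|v| \le kd$ but degree exactly $dk$ as well — wait, here I must be careful. I would instead argue directly with factor complexity: by Proposition \ref{prop: quadratic}, since $w$ is $4$-uniform and (one checks) not eventually periodic, $p_w(n) = \Theta(n)$, so $w$ has linear factor complexity; a word with linear complexity has the property (a standard fact, via the Morse–Hedlund/critical-exponent theory for automatic and linearly recurrent words — here $w$ is uniformly recurrent, even linearly recurrent, being a primitive-like fixed point) of \emph{bounded critical exponent}: there is a constant $E$ with $v^k$ a factor of $w$ only if $k|v| \le E|v| + O(1)$, hence $k$ bounded in terms of $|v|$, hence in terms of $d$. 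Then $v^N = 0$ in $A_w$ for $N$ exceeding that bound, for every degree-$d$ word $v$, and since there are finitely many such $v$, the homogeneous component of degree $d$ is nil of bounded index, i.e. nilpotent.

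The main obstacle, and the step I would spend the most care on, is establishing the \emph{bounded critical exponent} of $w$ — that no long factor is a high power — and in particular verifying that the specific morphism $\phi$ above (or its general $4$-uniform / linearly recurrent structure) has this property. One route is to invoke that a fixed point of a primitive $d$-uniform morphism is linearly recurrent (Durand), and linearly recurrent words have finite critical exponent (a theorem in the combinatorics-on-words literature); one must check $\phi$ is primitive, which by the criterion stated after the definition of primitivity reduces to checking that $x_1$ occurs in $\phi^n(a)$ for each $a \in \Sigma$ — a finite computation feasible from the explicit substitution table. The alternative, more self-contained route is a direct desubstitution argument: a high power $v^k$ in $w$ forces, by the injectivity-on-long-factors properties of uniform morphisms, a high power in a \emph{shorter} factor of $w$, giving an infinite descent that contradicts the boundedness of short factors (finiteness of the alphabet). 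Either way, once bounded critical exponent is in hand, the rest is the short degree-counting and finiteness argument above.
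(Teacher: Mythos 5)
Your reduction at the very first step is where the argument breaks. Nilpotence of the homogeneous component $(A_w)_d$ means $\bigl((A_w)_d\bigr)^N=(0)$ for some $N$, and since $(A_w)_d$ is spanned by the degree-$d$ subwords of $w$, this is equivalent to: no concatenation $u_1u_2\cdots u_N$ of $N$ \emph{possibly distinct} degree-$d$ words is a subword of $w$. You replace this by the statement that each single degree-$d$ word $v$ satisfies $v^N=0$, and then conclude ``nil of bounded index, i.e.\ nilpotent.'' That inference is false: a span of finitely many nilpotent monomials need not be nilpotent (already $\{e_{12},e_{21}\}\subset M_2(k)$ shows the products $e_{12}e_{21}e_{12}\cdots$ never vanish even though each spanning element squares to zero), and in a monomial algebra it is entirely possible for arbitrarily long products of \emph{distinct} degree-$d$ subwords to survive even when $w$ has bounded critical exponent. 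So the bounded-power-of-a-single-factor property you spend most of the proposal establishing does not imply the proposition. (As a secondary point, your claim that linear factor complexity implies bounded critical exponent is also not true in general — you would need linear recurrence, hence primitivity of $\phi$, which you only gesture at — but this is moot given the main gap.)

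The paper's proof is organized around exactly the object your reduction discards: it encodes a product $u_1\cdots u_r$ of degree-$d$ subwords as an arithmetic progression $a,a+d,\ldots,a+rd$ inside the set $\mathcal{S}$ of cumulative degrees of prefixes of $w$, so that non-nilpotence of $(A_w)_d$ becomes the existence of arbitrarily long such progressions with common difference $d$ — with no assumption that the blocks coincide. The self-similarity of $w$ (every $\phi^n(z)$ begins with $\phi^{n-2}(x_1)$, and $\phi^{n+1}(x_1)\phi^n(x_1)$ is a prefix of $w$) is then used to upgrade arbitrarily long progressions to an infinite one, a combinatorial argument about cyclic permutations of the gap vector forces $W_n\equiv 0\pmod d$ for all large $n$, and finally the explicit linear recurrence \eqref{eq: recurrence} for $W_n$ is analyzed modulo $2$ and modulo odd primes to show no $d>1$ can divide all large $W_n$. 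None of this number-theoretic core appears in your proposal, and it cannot be bypassed by a critical-exponent bound. To repair your approach you would have to control mixed products, which essentially forces you back to the progression/congruence argument.
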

\begin{proof}
We let $\mc{S}=\{s_0,s_1,s_2,\ldots\}$ denote the subset of $\mathbb{N}_0$ constructed as follows.
We define $s_0=0$ and for $i\ge 1$, we define $s_i=s_{i-1}+{\rm deg}(a_i)$, where $a_i\in\Sigma$ is the $i$-th letter of $w$.  Then since $w=x_1x_2y_1y_2x_1x_3y_1y_3\cdots $, we see that
$\mc{S}=\{0,1,3,5,7,8,10,12,14,\ldots\}$.  We now let $(A_w)_d$ denote the homogeneous component of $A_w$ of degree $d$.  Since $(A_w)_d$ is spanned by subwords of $w$ of degree $d$, we see that if $(A_w)_d$ is not nilpotent, then for every $r\ge 1$ there must exist some subword of $w$ of the form $u_1u_2\cdots u_r$ where $u_1,\ldots ,u_r$ are words of degree $d$.  In particular, $\mc{S}$ must contain an arithmetic progression of the form 
$a,a+d,a+2d,\ldots ,a+(r-1)d$ for every $r\ge 1$.  

We now show that this cannot occur.  We suppose, towards a contradiction, that there is some $d\ge 1$ such that $\mc{S}$ contains arbitrarily long arithmetic progressions of the form
$$
\lbrace a, a+d, a+2d, \ldots, a+bd\rbrace,
$$
where $a,b\in \N$. Then for every $n\ge 2$ there exists a subword $u=u_1u_2\cdots u_{r}$ of $w$ such that each $u_i$ is a subword of $w$ of degree $d$ and $r>4^{n+1}$.  Now $u$ has length at least $4^{n+1}$ and since $w$ is a fixed point of $\phi$, we then see that $u$ must have a subword of the form $\phi^n(z)$ for some $z\in \Sigma$.  It follows that 
there exist natural numbers $i$ and $j$ with $i>1$ and $i+j<r$ such that $\phi^n(z)=v u_i\cdots u_{i+j} v'$, where $v$ is a proper (possibly empty) suffix of $u_{i-1}$ and $v'$ is a proper (possibly empty) prefix of $u_{i+j+1}$.  Every $z\in \Sigma$ has the property that $\phi^2(z)$ begins with $x_1$ and so $\phi^n(z)$ begins with $\phi^{n-2}(x_1)$.  In particular, since $\phi^{n-2}(x_1)$ is a prefix of $w$ of length $4^{n-2}$ and it begins $vu_iu_{i+1}\cdots$, we see that $\mc{S}$ contains a progression of the form $t,t+d,t+2d,\ldots ,t+sd$ with $t<d$ and $i+(s+1)d$ strictly greater than the length of $\phi^{n-2}(x_1)$.  By assumption, $\mc{S}$ contains arbitrarily long arithmetic progressions of length $d$ and so there are infinitely many natural numbers $n$ for which 
$\phi^n(x_1)$ contains a progression of the form $t,t+d,t+2d,\cdots t+sd$ for some $t<d$ and $t+(s+1)d$ strictly greater than the length of $\phi^n(x_1)$.  In particular, there is some fixed $t<d$ for which there are infinitely many natural numbers $n$ with this property.  Thus we see $\mc{S}$ contains arbitrarily long arithmetic progressions of the form $t,t+d,t+2d,\ldots ,t+rd$ for some fixed $t<d$.  It follows that $\mc{S}$ contains an infinite arithmetic progression $t,t+d,t+2d,\ldots $.  

Now $$\phi^{n+2}(x_1)=\phi^{n+1}(x_1x_2y_1y_2)=\phi^{n+1}(x_1)\phi^n(x_1x_3y_1y_3)\phi^{n+1}(y_1y_2).$$
Thus $\phi^{n+1}(x_1)\phi^n(x_1)$ is a prefix of $w$ for every $n\ge 1$.

Without loss of generality $d\in \N$ is minimal with respect to having the property that $\mc{S}$ has an infinite arithmetic progression of the form $a+d\N$, where $a< d$. Define
$$
T:=\lbrace a: 0\leq a< d, \{a+dn\colon n\ge 0\}\subseteq  \mc{S}\rbrace.
$$
We write $T=\lbrace i_1, i_2, \dots, i_{p}\rbrace,$ where $i_1<i_2<\dots<i_{p}$. Notice that there exists a positive integer $N$ such that if $j\in \{0,\ldots ,d-1\}\setminus T$ then there is some $m$, depending upon $j$, such that $j+md\le N$ and $j+md\not\in \mc{S}$.

Now let $a_j:=i_{j+1}-i_{j}$ for $j=1,\ldots ,p$, where we take $i_{p+1}=i_1+d$. Consider 
$$
{\bf a}:=(a_1,a_2,\dots, a_p)\in \N^p.
$$
Let $\sigma=(1,2,3,\dots,p)\in S_p$, the symmetric group on $p$ letters. For $\pi\in S_p$, we let  $\pi({\bf a})$ denote $(a_{\pi(1)}, a_{\pi(2)},\dots, a_{\pi(p)})$.

We claim that no non-trivial cyclic permutation of ${\bf a}$ can be equal to ${\bf a}$.  Assume, towards a contradiction, that $\sigma^m(a)=a$ for some $m\in \{1,\ldots ,p-1\}$. Let $\pi=\sigma^m$ so that we then have that $a_i=a_{\pi(i)}$. We see that, by definition, $\mc{S}$ contains the set
$$
\{i_1,i_2,\dots, i_p, i_1+d,i_2+d,\dots, i_p+d, i_1+2d,\dots\}
$$ 
Moreover, the differences between successive terms of this sequence are given by the sequence
$$
(a_1,a_2,\dots, a_p, a_1,a_2,\dots, a_p,a_1,\dots).
$$
By repeatedly applying the identity $\sigma^m(a)=a$, we have that
$$
(a_1,a_2,\dots, a_p, a_1,a_2,\dots, a_p,a_1,\dots)=(a_1,a_2,\dots, a_{m},a_1,a_2,\dots, a_{m},a_1,\dots).
$$
Therefore $\mc{S}$ contains an infinite arithmetic progression of the form $j+(a_1+\dots +a_{m})\N$, and $a_1+\cdots +a_m<d$.  Moreover, by the argument we used earlier, where we observed that $\phi^n(x_1)$ occurs infinitely often in $w$ for every $n$, we see that we can take $j<a_1+\cdots +a_m$. But this contradicts the minimality of $d$. Therefore $\sigma^m(a)\neq a$ for any $m\in \{1,\ldots ,p-1\}$, as claimed.

Now for every $n\ge 2$, we have $\phi^{n+1}(x_1)\phi^n(x_1)$ is a prefix of $w$.  Moreover, by assumption the infinite arithmetic progressions in $\mc{S}$ with difference $d$ all appear in the subset
$$X:=\{i_1,i_2,\ldots ,i_p,i_1+d,i_2+d,\ldots \}.$$
We recall that we let $W_m$ denote the weight of $\phi^m(x_1)$ for each $m\ge 0$.  Then given a positive integer $n> N$, there exists a unique $s\in \{1,\ldots ,p\}$ and a unique $r\ge 0$ such that $i_s+dr$ is the largest positive integer in the set $\{i_q+d \ell \colon 1\le q\le p, \ell\ge 0\}$ that is less than or equal to $W_n$.  Since $\phi^n(x_1)\phi^{n-1}(x_1)$ is a prefix of $w$, we see that the part of the set
$$\{i_1,i_2,\ldots ,i_p, i_1+d,i_2+d\ldots ,i_s+dr,W_n+i_1, W_n+i_2,\ldots , W_n+i_p,W_n+i_1+d,\ldots \}$$
in 
$[0,W_{n}+W_{n-1}]$ is entirely contained in $\mc{S}$.

For $j=1,\ldots ,p$, define $i_{s+j}$ to be $i_{s+j-p}+d$ if $s+j>p$.  Then by definition of $T$, we have that
$\{i_{s+1}+dr, i_{s+2}+dr,\ldots , i_{s+p}+dr,i_{s+1}+d(r+1),\ldots \}\cap [0,W_{n}+W_{n-1}]\subseteq \mc{S}$ and so subtracting $W_n$, the weight of $\phi^n(x_1)$, and using the fact that the prefix $\phi^n(x_1)$ in $w$ is then followed by $\phi^{n-1}(x_1)$ in $w$, we see that 
$$\{i_{s+1}+dr-W_n,i_{s+2}+dr-W_n,\ldots\}\cap [0,W_{n-1}]\subseteq \mc{S}.$$
Since $n-1\ge N$ and $i_{s+j}+dr-W_n\in \{0,\ldots ,d-1\}$ for $j=0,\ldots ,p$, we see from tour choice of $N$ that we must have $i_{s+j}+dr-W_n = i_j$ for $j=1,\ldots ,p$.  
Notice also that $(i_{s+j+1}+dr-W_n)-(i_{s+j}+dr-W_n)=a_{s+j}$ for $j=1,\ldots ,p$, where we take $a_{s+j}=a_{s+j-p}$ if $s+j>p$.  Since 
$i_{s+j}+dr-W_n = i_j$, we see that $a_j=i_{j+1}-i_{j}=a_{s+j}$ for $j=1,\ldots ,p$.  Since no non-trivial cyclic permutation of ${\bf a}$ is equal to ${\bf a}$, we have that $s=p$ and so taking $j=1$ in the equation $i_{s+j}+dr-W_n = i_j$ gives
$i_1+dr+d-W_n = i_1$.  In particular, $W_n\equiv 0~(\bmod~d)$ for all $n> N$.  

We now show that there is no $d>1$ such that $d|W_n$ for all sufficiently large $n$, and we will then obtain the desired result since $\mc{S}$ cannot contain an infinite arithmetic progression with difference one.  

Using Equation (\ref{eq: charpoly}) and the Cayley-Hamilton theorem, we have that 
$W_n$ satisfies the recurrence
\begin{eqnarray}
&~& W_n - W_{n-1} - 8W_{n-2} - 16 W_{n-3} - 2W_{n-4} + 5 W_{n-5} + 5W_{n-6} +21 W_{n-7}\nonumber \\
&~& +31W_{n-8} -10W_{n-9} - 8 W_{n-10}=0 \label{eq: recurrence}
\end{eqnarray} for $n\ge 12$.
In particular, $W_n \equiv W_{n-1}+ W_{n-5} + W_{n-6} +W_{n-7} +W_{n-8} ~(\bmod\, 2)$ for all $n\ge 12$.
We can now show that there are infinitely many $n$ for which $W_n$ is not divisible by $2$.  To see this, suppose that this were not the case.  Then there would exist some largest natural number $\ell$ such that $W_{\ell}$ is odd.  From Item (\ref{eq: list}), we see that $\ell\ge 4$ and so $\ell+8\ge 12$.  But now Equation (\ref{eq: recurrence}) gives that
$W_{\ell+8} + W_{\ell+7}+ W_{\ell+3} + W_{\ell+2} +W_{\ell+1} \equiv W_{\ell} ~(\bmod\, 2)$, which is a contradiction since the left-hand side is even and the right-hand side is odd.  It follows that if $d|W_n$ for all sufficiently large $n$ then $d$ must be odd.  Now suppose towards a contradiction that $d$ is odd and that $d|W_n$ for all sufficiently large $n$ and that $d>1$.  Then there is some odd prime $p$ divides $d$.  Then we have $p|W_n$ for all $n$ sufficiently large.  Let $\ell$ be the largest natural number for which $p$ does not divide $W_{\ell}$.  Then 
since $\gcd(W_4,W_5)=1$, we see that $\ell\ge 4$.  But now, since $\ell+10\ge 12$, Equation (\ref{eq: recurrence}) gives that
$8W_{\ell}$ is a $\mathbb{Z}$-linear combination of elements of the form $W_{\ell+i}$ with $i=1,\ldots ,10$ and so we see that if $p|W_n$ for $n>\ell$ then $p|8W_{\ell}$.  Since $p$ is odd and $p$ does not divide $W_{\ell}$ we get a contradiction.  It follows that $d=1$ and so $\mc{S}$ must contain the progression $\{0,1,2,3,\ldots\}$, but this is clearly false.  The result follows.
\end{proof}
We next show that the algebra $A_w$ is finitely generated as a Lie algebra.
\begin{lemma} Let $v$ be a subword of $w$ of length at least two.  Then some cyclic permutation of $v$ is not a subword of $w$.
\label{lem: cyclic}
\end{lemma}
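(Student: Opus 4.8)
The plan is to induct on $|v|$, leveraging the rigid ``$XXYY$'' shape of $w$. Let $\tau\colon\Sigma^*\to\{X,Y\}^*$ be the morphism collapsing every $x_i$ to $X$ and every $y_i$ to $Y$. Since each $\phi(a)$ has the shape $x_?x_?y_?y_?$, we have $\tau(\phi(a))=XXYY$ for all $a$, hence $\tau(w)=(XXYY)^\omega$; in particular, in $w$ the $x$-letters occupy precisely the positions $\equiv 0,1\pmod 4$. Assume, toward a contradiction, that every cyclic permutation of $v$ is a subword of $w$. Then every cyclic factor of $v$ of length $\le|v|$ (a prefix of some cyclic permutation of $v$) is a subword of $w$, and so, applying $\tau$, every cyclic factor of $\tau(v)$ of length $\le|v|$ is a factor of $(XXYY)^\omega$. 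If $|v|=3$ this already fails, since every length-$3$ cyclic binary word has a window in $\{XXX,YYY,XYX,YXY\}$ and none of these occurs in $(XXYY)^\omega$. If $|v|\ge 4$, then every length-$4$ cyclic window of $\tau(v)$ lies in $\{XXYY,XYYX,YYXX,YXXY\}$; tracking how these four words overlap in three letters (they form a single $4$-cycle) forces $\tau(v)$ to be cyclically $(XXYY)$-periodic, so $4\mid|v|$, say $|v|=4m$, and some cyclic permutation $v^\ast$ of $v$ has $\tau(v^\ast)=(XXYY)^m$. Thus if $|v|\ge 5$ is not a multiple of $4$ we are already done, and from now on $|v|=4m$.

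The next step is recognizability of the $4$-uniform morphism $\phi$. The twelve words $\phi(a)$ are distinct of common length $4$, so $\phi$ is injective on $\Sigma^*$; and any subword of $w$ with $\tau$-image $(XXYY)^m$ begins with two consecutive $x$-letters, hence at a position $\equiv 0\pmod 4$, hence equals $\phi$ of a length-$m$ subword of $w$. So $v^\ast=\phi(u)$ with $u$ a length-$m$ subword of $w$. The cyclic permutations of $\phi(u)$ by multiples of $4$ are exactly the words $\phi(u')$ with $u'$ a cyclic permutation of $u$; each is a subword of $w$ (being a cyclic permutation of $v$), has $\tau$-image $(XXYY)^m$, and therefore equals $\phi$ of an honest length-$m$ subword of $w$, whence by injectivity of $\phi$ every cyclic permutation of $u$ is a subword of $w$. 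When $m\ge 2$ this contradicts the lemma applied to the shorter word $u$, and the induction closes.

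It remains to settle the two base cases $|v|=2$ and $|v|=4$, and this is where one actually uses the definition of $\phi$. For $|v|=4$ we have $v^\ast=\phi(a)=x_ix_jy_ky_l$; its cyclic shift $y_ky_lx_ix_j$ has $\tau$-image $YYXX$, so as a subword of $w$ it sits at a position $\equiv 2\pmod 4$ and hence equals $(\text{last two letters of }\phi(b))(\text{first two letters of }\phi(c))$ for some $2$-factor $bc$ of $w$. Since $b\mapsto(\text{last two letters of }\phi(b))$ and $c\mapsto(\text{first two letters of }\phi(c))$ are both injective, this forces $b=c=a$, i.e. $aa$ is a $2$-factor of $w$ --- impossible, because one checks that no $\phi(b)$ has two equal consecutive letters and no junction $\phi(b)\phi(c)$ creates one, so $w$ has no factor of the form $bb$. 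For $|v|=2$, write $v=w_pw_{p+1}$ and split on $p\bmod 4$: when $p\equiv 0$, $v=x_ix_j$ is the first two letters of some $\phi$-image, hence $i<j$, while its reverse $x_jx_i$ (which has $\tau$-image $XX$, so would again be the first two letters of a $\phi$-image) would need increasing indices; the case $p\equiv 2$ is identical with $y$'s; and for $p\in\{1,3\}$ the reverse swaps an $x$ with a $y$, and one checks it against the short lists of admissible pairs $x_iy_j$ occurring at positions $\equiv1$ and $y_jx_i$ occurring at positions $\equiv3$, using that the second letter of every $\phi(b)$ lies in $\{x_2,\dots,x_6\}$ (so $x_1$ occurs only block-initially and is always followed by an $x$), the last letter of every $\phi(b)$ lies in $\{y_2,\dots,y_6\}$, and the first letter of every $\phi(b)$ lies in $\{x_1,x_2,x_3\}$.

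I expect the main obstacle to be the case $|v|=2$: it is the part not governed by the $XXYY$ pattern or by recognizability, and it requires distilling the admissible length-$2$ factors of $w$ from the morphism and ruling out the single genuine near-miss --- the pair $x_3y_2$ versus $y_2x_3$, which is eliminated by the observation that $x_1$ is always followed by an $x$-letter in $w$. Everything else, namely the inductive step and the $|v|=4$ case, follows cleanly from injectivity of $\phi$ together with the one combinatorial fact that $w$ is square-free at the level of single letters.
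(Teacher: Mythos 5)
Your proof is correct and follows essentially the same route as the paper's: induction on $|v|$ with base cases $2,3,4$, using the rigid $x\,x\,y\,y$ block structure to force $4\mid |v|$ and the injectivity/recognizability of the $4$-uniform morphism $\phi$ to reduce a length-$4m$ word to a length-$m$ word. The collapsing morphism $\tau$ and the direct verification that $w$ has no repeated letter (where the paper instead cites its length-$2$ case) are only presentational variations.
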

\begin{proof} Let $d$ denote the length of $v$.  We prove this by induction on $d$.  Our base cases are when $d=2,3,4$.  We consider each of these cases separately.  In each case, we suppose towards a contradiction that there exists a word $v$ of length $d$ all of whose cyclic permutations are subwords of $w$ and we derive a contradiction. 
\vskip 2mm
\emph{Case I: d=2}.  For the case when $d=2$, observe that a subword of $w$ of length two is either of the form $x_ix_j$ with $i<j$ or $y_iy_j$ with $i<j$ or of the form $x_k y_{\ell}$ or $y_{\ell}x_k$.  It is immediate that if our subword of length two is of the form $x_i x_j$ or $y_i y_j$ then $i<j$ and so $x_j x_i$ and $y_j y_i$ cannot be subwords of $w$ in this case.  Thus for the case when $d=2$, it only remains to show that if $x_i y_j$ is a subword of $w$ then $y_j x_i$ cannot be.  Observe that any subword of $w$ of length two is either a subword of $\phi(a)$ for some $a\in \Sigma$ or it is a subword of $\phi(ab)$, with $a,b\in \Sigma$, consisting of the last letter of $\phi(a)$ followed by the first letter of $\phi(b)$.  In the case that we have a word of the form $x_i y_j$, then we see that it must be the second and third letters of $\phi(a)$ for some $a\in \Sigma$.  We are also assuming that $y_jx_i$ is a subword of $w$.  In this case, we have that there are letters $b,c\in \Sigma$ such that $bc$ is a subword of $w$ and $y_j$ is the last letter of $\phi(b)$ and $x_i$ is the first letter of $\phi(c)$.  

Since the second letter of any $\phi(a)$ must be in $\{x_2,x_3,x_4,x_5,x_6\}$, we see that $i\neq 1$.  Similarly, the first letter of any $\phi(c)$ must be in $\{x_1,x_2,x_3\}$ and so $i\neq 4,5,6$.  Thus $i\in \{2,3\}$.  Notice that the last letter of $\phi(b)$ can never be $y_1$ and so $j\in \{2,3,4,5,6\}$.  By assumption, there exists some $a\in \Sigma$ such that $x_iy_j$ is a subword of $\phi(a)$ with $i\in \{2,3\}$ and $j>1$. Looking at the map $\phi$, we see that the only possibility is $i=3$, $j=2$ coming from  $a=x_6$.  But then $y_jx_i = y_2 x_3$.  By assumption, there exist $b$ and $c$ in $\Sigma$ such that $bc$ is a subword of $w$ and such that $y_2$ is the last letter of $\phi(b)$ and $x_3$ is the first letter of $\phi(c)$.  But $x_1$ is the only letter in $\Sigma$ with the property that applying $\phi$ to it gives a four-letter word ending in $y_2$.  Thus $b=x_1$.  Similarly, since $\phi(c)$ begins with $x_3$ we see $c\in \{y_4,y_5,y_6\}$.  But this then means that $x_1y_k$ is a subword of $w$ for some $k\ge 4$, which is impossible since $x_1$ is always followed by an element from $\{x_2,\ldots ,x_6\}$ in $w$. 
\vskip 2mm
\emph{Case II: d=3}. Since we can never have three consecutive letters from $\{x_1,\ldots ,x_6\}$ occurring in $w$, we see that there are at most two letters from $x_1,\ldots ,x_6$ in $v$.  Similarly, there are at most two letters from $y_1,\ldots ,y_6$ occurring in $v$.  Thus either $v$ has exactly two letters from $\{x_1,\ldots ,x_6\}$ and one letter from $\{y_1,\ldots ,y_6\}$ or it has exactly two letters from 
$\{y_1,\ldots ,y_6\}$ and one letter from $\{x_1,\ldots ,x_6\}$.  We consider the first case, as the other case is identical.  In the first case, $v$ has some cyclic permutation of the form $x_i y_j x_k$, which cannot be a subword of $w$ since we must always have a block of two consecutive $y_j$'s between blocks of $x_i$'s.  
\vskip 2mm
\emph{Case III: d=4.} In this case, we can argue as in Case II to show that some cyclic permutation of $v$ is of the form $x_i x_j y_k y_{\ell}$.  Any subword of $w$ of the form $x_i x_j y_k y_{\ell}$ must be $\phi(a)$ for some $a\in \Sigma$.  In particular, $i<j$ and $k<\ell$.  But by assumption $y_k y_{\ell}x_i x_j$ is also a subword of $w$ and so there exist $b,c\in \Sigma$ such that $bc$ is a subword of $w$, $y_k y_{\ell}$ are the last two letters of $\phi(b)$ and $x_ix_j$ are the last two letters of $\phi(c)$.   
But the first two letters of $\phi(d)$ for $d\in \Sigma$ completely determine $d$, and similarly for the last two letters.  In particular, $b=c=a$ and so $bc=a^2$ is a subword of $w$.  But we now see this is impossible from Case I.  

\vskip 2mm
We now complete the induction argument.  Suppose that $d\ge 5$ is such that if $m\in \{2,\ldots ,d-1\}$ then any subword $u$ of $w$ of length $m$ has the property that some cyclic permutation of $u$ is not a subword of $w$.  Let $v$ be a word of length $d$.  Then arguing as in Case II, we have that some cyclic permutation $v'$ of $v$ is of the form
$x_{i_1}x_{i_2}y_{j_1}y_{j_2}x_{i_3}\cdots $.  Notice that if $d$ is $1$ or $2~(\bmod 4)$ then $v'$ ends with some $x_k$ and so if we move this $x_k$ to the beginning of $v'$ then we get a cyclic permutation of $v$ with three consecutive letters $x_k x_{i_1}x_{i_2}$, which cannot be a subword of $w$.  If $d$ is $3~(\bmod 4)$, then the last two letters of $v'$ are of the form $x_i y_j$ and so if we shift these two letters to the beginning of $v'$ we see that $v$ has a cyclic permutation with three consecutive letters $x_iy_jx_{i_1}$, which cannot be a subword of $w$.  Thus we have $d=4m$ with $m\ge 2$.  Also
$v'=\phi(a_1)\cdots \phi(a_m)$ for some $a_1,\ldots ,a_m\in \Sigma$.  But then if $u=a_1\ldots a_m$ and $u'$ is a cyclic permutation of $u$ then $\phi(u')$ is a cyclic permutation of $\phi(u)=v'$.  Moreover, since $\phi(u')$ begins with two consecutive letters from $x_i$ and $x_j$, we see that $\phi(u')$ is a subword of $w$ if and only if $u'$ is a subword of $w$.  In particular, if all cyclic permutations of $v$ are subwords of $w$ then all cyclic permutations of $u$ are subwords of $w$.  But $u$ has length $m\in \{2,\ldots ,d-1\}$ and so we see that this cannot occur by the induction hypothesis.  The result now follows.
\end{proof}
\begin{prop} The algebra $A_w$ is finitely generated as a Lie algebra.\label{prop: fg}\end{prop}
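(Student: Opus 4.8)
The plan is to prove the stronger statement that the Lie subalgebra $L\subseteq A_w$ generated by the images of the twelve letters $x_1,\dots,x_6,y_1,\dots,y_6$ together with $1$ is all of $A_w$. Since $A_w$ is a monomial algebra, it has a $k$-basis consisting of $1$ and the images of the subwords of $w$, so it suffices to show that the image of every subword of $w$ lies in $L$; I would do this by induction on the length of the subword, the base cases of length $0$ and $1$ being immediate.

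For the inductive step, fix $n\ge 2$, assume that every subword of $w$ of length $<n$ has image in $L$, and let $v = a_1a_2\cdots a_n$ be a subword of $w$ with each $a_i\in\Sigma$. For $0\le j\le n-1$ write $v^{(j)}$ for the image in $A_w$ of the cyclic rotation $a_{j+1}a_{j+2}\cdots a_n a_1\cdots a_j$ (so $v^{(0)}$ is the image of $v$), and write $u_j$ for the image of the length-$(n-1)$ word $a_{j+2}\cdots a_n a_1\cdots a_j$. The key identity I would use is
\[
[a_{j+1},\,u_j] \;=\; a_{j+1}u_j - u_j a_{j+1} \;=\; v^{(j)} - v^{(j+1)},
\]
which holds in $A_w$ because in a monomial algebra the product of two basis monomials is either the corresponding word or zero. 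Since $u_j$ comes from a word of length $n-1$, by the inductive hypothesis it is either (the image of) a subword of $w$, hence in $L$, or it is $0$; in either case $u_j\in L$ and so $[a_{j+1},u_j]\in L$.

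To finish I would invoke Lemma~\ref{lem: cyclic}: as $v$ has length $\ge 2$, some cyclic permutation of $v$ fails to be a subword of $w$, so there is a $k\in\{1,\dots,n-1\}$ with $v^{(k)}=0$ in $A_w$. A downward induction on $j$ from $k$ to $0$ then works: $v^{(k)} = 0\in L$, and the identity above gives $v^{(j)} = [a_{j+1},u_j] + v^{(j+1)} \in L$ whenever $v^{(j+1)}\in L$. Hence $v^{(0)}\in L$, i.e. the image of $v$ lies in $L$, which completes the induction and shows $A_w = L$.

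The substantive content here is Lemma~\ref{lem: cyclic}, which is already proved; the argument above is then a formal telescoping manipulation. The one place calling for a little care — and the only real obstacle I foresee — is keeping track of the cyclic rotations: checking that each intermediate word $u_j$ genuinely has length strictly less than $n$ so that the inductive hypothesis applies, and observing that the commutator identity holds in $A_w$ on the nose whether or not the individual rotations happen to be subwords of $w$ or collapse to zero.
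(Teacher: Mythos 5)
Your argument is correct and rests on the same two pillars as the paper's proof: induction on the length of a subword and Lemma~\ref{lem: cyclic}. The only difference is cosmetic: where you telescope through the cyclic rotations $v^{(0)},\dots,v^{(k)}$ via brackets with single letters, the paper splits $v$ just once at the failing rotation, writing $v=ab$ with $ba$ not a subword of $w$, so that $v=ab-ba=[a,b]$ with $a,b$ shorter subwords whose images already lie in the Lie subalgebra by the inductive hypothesis.
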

\begin{proof} Let $B$ denote the Lie subalgebra of $A_w$ that is generated by the elements
$$\{1,x_1,\ldots ,x_6,y_1,\ldots ,y_6\}.$$  Since $A_w$ is spanned by the images of all subwords of $w$, it suffices to show that the image of every subword of $w$ is in $B$.  Suppose that $u$ is a subword of $w$ whose image is not in $B$.  We may assume that we pick $u$ of minimal length $d$ with respect to having this property.  Since all subwords of $w$ of length $\le 1$ have images in $B$, $d$ must be
 at least two.  By Lemma \ref{lem: cyclic}, $u$ has some cyclic permutation that is not a subword of $w$.  In particular, we may decompose $u=ab$ so that $a$ and $b$ are subwords of $w$ but such that $ba$ is not a subword of $w$; i.e., $ba$ has zero image in $A_w$.  Then $u=[a,b]\in A_w$.  But now $a$ and $b$ have length less than $d$ and so by minimality of $d$, we have that the images of $a$ and $b$ are in $B$ and so the image of $u=[a,b]$ is in also in $B$, a contradiction.  The result follows.
 \end{proof}
We need one last result to obtain our example.  
\begin{lemma} \label{lem: evper}
The word $w$ is uniformly recurrent.
\end{lemma}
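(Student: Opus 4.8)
The plan is to show that $w$ is uniformly recurrent by exploiting the fact that $w$ is a $4$-uniform fixed point of a primitive morphism, and then invoking the classical fact that fixed points of primitive morphisms are uniformly recurrent (see \cite[Theorem 10.9.5]{AS} or the treatment in \cite{BR}). So the real content is to verify that $\phi$ is primitive, and by the remark immediately following the definition of primitivity in \S2, it suffices to check that for every $a \in \Sigma$ there is some $n = n(a)$ such that $x_1$ occurs as a letter in $\phi^n(a)$.

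The first step is to record the directed ``dependency'' graph $G$ on vertex set $\Sigma = \{x_1,\dots,x_6,y_1,\dots,y_6\}$, with an edge $a \to b$ whenever $b$ appears in $\phi(a)$; equivalently, $G$ is the graph whose adjacency matrix is the transpose of the support of the incidence matrix $M(w)$. Reading off $\phi$: each $x_i$ with $i \le 5$ has $x_1$ in its image, so $x_1,\dots,x_5$ all reach $x_1$ in one step; $x_6 \to x_2 \to x_1$, so $x_6$ reaches $x_1$ in two steps. For the $y$'s: $y_1 \to x_2 \to x_1$, $y_2 \to x_2 \to x_1$, $y_3 \to x_2 \to x_1$, $y_4 \to x_3 \to x_1$, $y_5 \to x_3 \to x_1$, and $y_6 \to x_3 \to x_1$. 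Thus every letter reaches $x_1$ within two steps, which by the remark in \S2 establishes that $\phi$ is primitive (once we also note that $\phi$ is prolongable on $x_1$, since $\phi(x_1) = x_1 x_2 y_1 y_2$, and that every letter of $\Sigma$ occurs in $w$, which follows from primitivity and prolongability as observed in \S2).

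The second step is simply to cite the standard theorem: a pure morphic word arising from a primitive morphism is uniformly recurrent. I would phrase it as: since $\phi$ is a primitive morphism that is prolongable on $x_1$ and $w = \phi^\omega(x_1)$, the word $w$ is uniformly recurrent by \cite[Theorem 10.9.5]{AS}. (Alternatively one can argue directly: given a subword $v$ of $w$, it occurs in $\phi^m(x_1)$ for some $m$, hence in $\phi^m(a)$ for every letter $a$ occurring sufficiently deep, hence by primitivity there is $N$ with $v$ occurring in $\phi^{m+N}(a)$ for every $a \in \Sigma$; since $w$ is a concatenation of blocks $\phi^{m+N}(a)$ with $a \in \Sigma$ and these blocks have bounded length $4^{m+N}$, every window of length $2 \cdot 4^{m+N}$ contains a full such block and hence contains $v$.)

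I do not expect any serious obstacle here: the only thing to be careful about is the bookkeeping in the dependency graph — i.e., confirming that $x_1$ is genuinely reachable from every vertex — and making sure the cited uniform-recurrence theorem is stated for pure morphic (not merely automatic) words. Both are routine. If one prefers to avoid citing a black box, the direct argument in the previous paragraph is short and self-contained given the $4$-uniformity of $\phi$.
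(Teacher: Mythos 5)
Your proposal is correct, and its core computation is exactly the one the paper uses: every letter of $\Sigma$ has an $x_i$ with $i\le 3$ as the first letter of its image, so $\phi^2(u)$ begins with (in particular, contains) $x_1$ for every $u\in\Sigma$. Where you differ is in the packaging: the paper never invokes primitivity or an external theorem, but argues directly that occurrences of $x_1$ in $w$ are syndetic with gap at most $16$, hence occurrences of $\phi^n(x_1)$ are syndetic with gap at most $16\cdot 4^n$, and since every subword of $w$ lies in some $\phi^n(x_1)$ this gives uniform recurrence --- essentially the self-contained argument you sketch in your parenthetical remark. Your primary route (verify primitivity of $\phi$, then cite the classical fact that a pure morphic word generated by a primitive prolongable morphism is uniformly recurrent) is equally valid and arguably cleaner as a citation, at the cost of a black box. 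One small point to tighten: you say that ``every letter of $\Sigma$ occurs in $w$'' follows from primitivity, but the reduction in \S 2 that lets you test primitivity only by checking reachability of $x_1$ itself presupposes that every letter occurs in $w$; so as written this is circular. It is harmless --- one checks directly that all twelve letters appear in $\phi^2(x_1x_2y_1y_2)$ and its images (e.g.\ $x_6,y_6$ appear in $\phi(x_5)$) --- but that verification should be done explicitly rather than deduced from the conclusion.
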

\begin{proof}
It is straightforward to show that $\phi^2(u)$ begins with $x_1$ for each $u\in \Sigma$.  Since $w$ can be written as 
$\phi^2(u_1)\phi^2(u_2)\cdots$ with $u_1,u_2,\ldots \in \Sigma$, and since each $\phi^2(u_i)$ has length $16$, we see that each subword of $w$ of length at least $16$ contains $x_1$.  
Let $v$ be a subword of $w$ and let $n$ be a natural number.
Write $w=a_1a_2\cdots$ with $a_i\in \Sigma$.  Then 
we have shown that whenever $a_i=x_1$ there is some $N\le 16$ such that $a_{i+N}=x_1$.  Since
$w=\phi^n(a_1)\phi^n(a_2)\cdots$ and each $\phi^n(a_j)$ has length $4^n$, we see that if $\phi^n(x_1)$ occurs in $w$ then there is another occurrence of $\phi^n(x_1)$ beginning at most $16\cdot 4^n$ places later.  In particular, since each subword of $w$ is a subword of $\phi^n(x_1)$ for some $n$, we see that $w$ is uniformly recurrent.
\end{proof}
Putting these results together, we obtain the following result, which answers questions 31, 32, and 36 of \cite{GLSZ}.
  
  \begin{mythm}\label{thm: main2} Let $R$ denote the positive part of the algebra $A_w$ with the grading described above.  Then $R$ is just infinite, graded nilpotent, has quadratic growth, has trivial Jacobson radical, and is finitely generated as a Lie algebra.\end{mythm}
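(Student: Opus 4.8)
The plan is to verify the five asserted properties in turn. All but the triviality of the Jacobson radical follow almost at once from results already in hand, and that last point also reduces, after the right preliminary observation, to a single computation.

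First record what is needed about $w$ and $A_w$. Since $\phi$ is prolongable on $x_1$ and $w$ begins $x_1x_2y_1y_2x_1x_3y_1y_3\cdots$, its first letter $x_1$ occurs at least twice (indeed infinitely often), so $A_w$ is prime by Theorem~\ref{thm: char}(1). By Lemma~\ref{lem: evper}, $w$ is uniformly recurrent, so $A_w$ is just infinite by Theorem~\ref{thm: char}(3). Moreover $w$ is not eventually periodic: if it were, with period word $v$, then $v^n$ would be a subword of $w$ for every $n$, so the homogeneous element $v\in(A_w)_{{\rm deg}(v)}$ would fail to be nilpotent, contradicting Proposition~\ref{prop: grnilpotent}. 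As $w$ is $4$-uniform, Proposition~\ref{prop: quadratic} now gives that $A_w$ has quadratic growth.

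Next transfer these facts to $R=A_w^+$. The key remark is that any two-sided ideal $I$ of $R$ is automatically a two-sided ideal of $A_w$, because $A_wI=(k\cdot 1+R)I=I+RI=I$ and likewise $IA_w=I$. It follows that $R$ is prime (nonzero ideals $I,I'$ of $R$ with $II'=(0)$ would be nonzero ideals of the prime ring $A_w$ annihilating each other) and just infinite (it is infinite-dimensional, and any proper quotient $R/I$ embeds in $A_w/I$, which is finite-dimensional since $A_w$ is just infinite and $I$ is a nonzero ideal of $A_w$); and $R$ has quadratic growth since it differs from $A_w$ only in the one-dimensional space $k\cdot 1$. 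For graded nilpotence, $R=\bigoplus_{d\ge1}(A_w)_d$ and, given any set $S$ of homogeneous elements all of a fixed degree $d\ge1$, we have $S\subseteq(A_w)_d$ and Proposition~\ref{prop: grnilpotent} provides $N$ with $(A_w)_d^{\,N}=(0)$, so $S^N=(0)$. For finite generation as a Lie algebra, the argument of Proposition~\ref{prop: fg} shows by induction on length that every subword of $w$ of positive length lies in the Lie subalgebra of $A_w$ generated by $\{x_1,\dots,x_6,y_1,\dots,y_6\}$ (a subword $u$ of length $\ge2$ is written $u=ab$ with $a,b$ proper and nonempty, and since $ba=0$ in $A_w$ one has $u=[a,b]$); as these monomials span $R$, the algebra $R$ is generated as a Lie algebra by these twelve elements.

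It remains to show $J(A_w)=(0)$, since then $J(R)=R\cap J(A_w)=(0)$. Grade $A_w$ by word length; this makes it a connected $\mathbb{N}$-graded algebra, so by a theorem of Bergman $J:=J(A_w)$ is homogeneous, and being proper it contains no scalars, hence $J\subseteq A_w^+$. Suppose $J\neq(0)$. Since $A_w$ is just infinite, $B:=A_w/J$ is finite-dimensional, and it inherits a connected $\mathbb{N}$-grading with $B_0=k$. A connected graded algebra has no idempotents besides $0$ and $1$: an idempotent in $B^+$ lies in $\bigcap_m B_{\ge m}=(0)$, and if $e=1+e'$ with $e'\in B^+$ then $-e'=1-e$ is an idempotent in $B^+$, so $e'=0$. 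But $B$ is finite-dimensional and semiprimitive, hence semisimple; a semisimple ring with only the trivial idempotents is a division ring, and a connected graded division ring equals $k$ (the inverse of a nonzero homogeneous element of positive degree would have to live in a negative degree). Thus $B=k$, i.e.\ $J=A_w^+$ — which is impossible, since the element $\xi:=x_1+\dots+x_6+y_1+\dots+y_6$ satisfies that $\xi^n$ is the sum of all length-$n$ subwords of $w$, hence $\xi^n\neq0$ for every $n$ (distinct subwords of $w$ are linearly independent), so $1-\xi$ has no one-sided inverse and $\xi\notin J(A_w)$. Therefore $J(A_w)=(0)$, and $R$ is prime, graded nilpotent, of quadratic growth, just infinite, semiprimitive, and finitely generated as a Lie algebra, which is the theorem. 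The one step taking real thought is the dichotomy $J(A_w)\in\{(0),A_w^+\}$ forced by the connected grading; everything else is bookkeeping along the codimension-one inclusion $R\subseteq A_w$ together with the structural results already proved.
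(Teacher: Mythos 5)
Your proof is correct, and on two of the five claims it takes a genuinely different route from the paper. For \emph{just infinite}, \emph{graded nilpotent}, \emph{quadratic growth}, and \emph{finitely generated as a Lie algebra} you follow essentially the same path (Lemma~\ref{lem: evper} with Theorem~\ref{thm: char}, Proposition~\ref{prop: grnilpotent}, Proposition~\ref{prop: quadratic}, Proposition~\ref{prop: fg}), though you are more careful than the paper about transferring statements from $A_w$ to its augmentation ideal $R$, which is a worthwhile addition. The divergences: (i) to rule out eventual periodicity, the paper invokes the graded version of Goldie's theorem to produce a regular homogeneous element of positive degree in a hypothetical noetherian PI quotient, whereas you simply observe that a period word $v$ would be a non-nilpotent homogeneous element, contradicting Proposition~\ref{prop: grnilpotent} directly --- shorter and more self-contained; (ii) for semiprimitivity, the paper cites Corollary 3.11 and Proposition 3.8 of \cite{Belov} (uniformly recurrent and not eventually periodic implies trivial radical for monomial algebras), whereas you argue from scratch: Bergman's theorem makes $J(A_w)$ homogeneous for the word-length grading, just-infiniteness forces a nonzero radical to have finite-codimensional connected graded semisimple quotient, the no-nontrivial-idempotents argument collapses that quotient to $k$, and the element $\xi=\sum x_i+\sum y_j$ with $\xi^n\neq 0$ shows $1-\xi$ is not invertible, so $J(A_w)\neq A_w^+$. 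This dichotomy argument is sound (each step --- homogeneity of $J$, triviality of idempotents in a connected graded ring, Wedderburn, homogeneity of inverses of homogeneous units --- checks out) and buys independence from the Belov--Borisenko--Latyshev machinery at the cost of invoking Bergman's theorem on radicals of graded rings; the paper's citation is shorter but less transparent. Both approaches to the radical crucially use just-infiniteness and uniform recurrence, so the logical dependencies are the same.
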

\begin{proof}
The fact that $R$ is graded nilpotent follows from Proposition \ref{prop: grnilpotent}.  By Lemma \ref{lem: evper} and Theorem \ref{thm: char} (4), we see that $A_w$ is just infinite and hence $R$ is just infinite.  
To prove the remaining claims, we first show that $A_w$ does not satisfy a polynomial identity.  To see this, by Theorem \ref{thm: char} it suffices to show that $w$ is not eventually periodic.  Towards a contradiction, suppose that $w$ is eventually periodic.  Then by Theorem \ref{thm: char}, $A_w$ is an infinite-dimensional graded noetherian PI algebra.  It is also prime since $A_w$ is just infinite. But now the graded version of Goldie's theorem \cite{GS} gives that $A_w$ must have a regular homogeneous element of positive degree, a contradiction.  Thus $A_w$ is not PI.  

Since $w$ is not eventually periodic, we now see that $R$ has quadratic growth by Proposition \ref{prop: quadratic}, since $w$ is $4$-uniform.  
To show that $R$ has trivial Jacobson radical, it suffices to show that $A_w$ is semiprimitive.   Since $w$ is uniformly recurrent and $w$ is not eventually periodic, Corollary 3.11 and Proposition 3.8 of \cite{Belov} then gives that the Jacobson radical of $A_w$ is zero.  Finally, $R$ is finitely generated as a Lie algebra by Proposition \ref{prop: fg}. This completes the proof.  
\end{proof}
In fact, since $A_w$ is just infinite, it is prime and so by a result of Okni{\'n}ski \cite{Ok}, the algebra $A_w$ must be primitive, since it is not PI and has zero Jacobson radical. 
\section{Concluding questions}
There are lots of natural questions one can ask about iterative algebras.  We restrict ourselves to posing just a small number of questions, which we have not investigated in this article, but which we feel could be of independent interest or of later use.
The first, and most compelling question, has to do with understanding ${\sf QGr}(A_w)$, the category of $\mathbb{Z}$-graded right $A_w$-modules modulo the full subcategory consisting of modules that are the sum of finite-dimensional submodules.  Holdaway and Smith \cite{HS} investigated this category for finitely presented algebras, and it is natural to investigate it in this setting.
\begin{quest} Can one give a concrete description of the category ${\sf QGr}(A_w)$ for an iterative algebra $A_w$?
\end{quest}
A second question deals with when, precisely, can an iterative algebra be given a grading that makes the algebra graded nilpotent.
\begin{quest} Can one determine when an iterative algebra associated to a pure morphic word $w$ over an alphabet $\Sigma$ has a grading, induced by making the letters of $\Sigma$ homogeneous of some positive degrees, such that the positive part of $A_w$ is graded nilpotent?
\end{quest}
\begin{quest} Can one determine for which pure morphic words $w$ the Jacobson radical of $A_w$ is trivial?
\end{quest}
By a result of Okni{\'n}ski \cite{Ok}, a prime monomial algebra is either primitive, satisfies a polynomial identity, or has a nonzero locally nilpotent Jacobson radical.  Thus if one could decide whether the Jacobson radical is nonzero then one can decide whether or not the algebra is primitive by Theorem \ref{thm: char}.

We are also intrigued by the fact that there are five different types of growth of iterative algebras, as shown by Theorem \ref{thm: GK}.  We pose the following question.
\begin{quest} Can one give purely ring theoretic characterizations that determine which of the five possible growth types an iterative algebra has?
\end{quest}
This question is admittedly vague, but we observe that $A_w$ has linear growth if and only if $A_w$ satisfies a polynomial identity.  We wonder if, in a similar vein, one can find ring theoretic characterizations for the other growth types.

For the last question, we note that decidability of whether a morphic word is eventually periodic is a longstanding open problem (cf. \cite[p. 2]{HHKR}).  In terms of algebras this corresponds to looking at graded subalgebras (generated in degree one) of $A_w$ and asking whether or not one can decide if the subalgebra has Gelfand-Kirillov dimension one.  Motivated by these problems from theoretical computer science, we pose the following questions. 
\begin{quest} Given an iterative algebra $A_w$ can one determine the possible values for the GK dimension of a subalgebra $B$ that is generated by elements of degree one?   Is it decidable whether or not $B$ has GK dimension one?
\end{quest}
Finally, there are interesting questions about finite presentation and Hilbert series of iterative algebras.
\begin{quest} Given an iterative algebra $A_w$ can one characterize the words for which $A_w$ has rational Hilbert series with the standard grading?  Can one characterize the words for which $A_w$ is finitely presented?
\end{quest}

\end{document}